\DeclareMathOperator*{\esssup}{ess\,sup}
\DeclareMathOperator*{\essinf}{ess\,inf}
\newenvironment{class}[1][(2020) Mathematics Subject Classification]{\textbf{#1.} }{}
\newenvironment{MC}[1][Key words and phrases]{\textbf{#1.} }{}
\providecommand{\U}[1]{\protect\rule{.1in}{.1in}}
\newtheorem{theorem}{Theorem}
\newtheorem{lemma}[theorem]{Lemma}
\newtheorem{proposition}[theorem]{Proposition}
\newtheorem{remark}[theorem]{Remark}
\newenvironment{proof}[1][Proof]{\noindent\textbf{#1.} }{\ \rule{0.5em}{0.5em}}
\begin{document}
	\title{Generalized Reflected BSDEs with RCLL Random Obstacles in a General Filtration\thanks{This work constitutes the main paper on which the results of the recent paper \cite{ElmansouriROSE2025} are based, and from which the main results of that paper are derived.}}
	%
	%
	\author{Badr ELMANSOURI\footnote{Corresponding author} \\
		Cadi Ayyad University (UCA), National School of Applied Sciences of Marrakech (ENSA-M),\\ BP 575, Avenue Abdelkrim Khattabi, 40000, Guéliz, Marrakech, Morocco.\\
		Emails: \url{b.elmansouri@uca.ac.ma} \& \url{badr.elmansouri@edu.uiz.ac.ma} \\\\
		 Mohamed EL OTMANI\\
		Faculty of Sciences Agadir, Ibn Zohr University,\\ Laboratory of Analysis and Applied Mathematics (LAMA),\\
		Hay Dakhla, BP8106, Agadir, Morocco\\
		Emails: \url{m.elotmani@uiz.ac.ma}}

	\maketitle
\begin{abstract}
	This paper addresses the existence and uniqueness of solutions to Reflected Generalized Backward Stochastic Differential Equations (GRBSDEs) within a general filtration that supports a Brownian motion and an independent integer-valued random measure. Our study focuses on cases where the given data satisfy appropriate $\mathbb{L}^2$-integrability conditions and the coefficients satisfy a monotonicity assumption. Additionally, we establish a connection between the solution and an optimal control problem over the set of stopping times.
\end{abstract}

	\begin{MC}
		Reflected Generalized BSDEs, Integer-valued random jump measure, General filtration, Monotonic coefficients.
	\end{MC}
	
	\begin{class}
		60H05; 60H20; 60H30; 60G46, 60G57.
	\end{class}

	\section{Introduction} \label{Intro}
The use of probabilistic methods to study partial differential equations (PDEs) is a classical approach, particularly since the establishment of the connection between solutions of linear PDEs and diffusion processes via the celebrated Feynman-Kac formula. In 1990, Pardoux and Peng \cite{pardouxpengadapted} introduced a new class of stochastic differential equations, known as backward stochastic differential equations (BSDEs). For a brief mathematical description, let $T > 0$ be a finite time horizon, and let $(\Omega, \mathcal{F}, \mathbb{P})$ be a probability space on which a $d$-dimensional Brownian motion $(B_t)_{t \leq T}$ is defined. We denote by $\mathbb{F} := (\mathcal{F}_t)_{t \in [0,T]}$ the filtration generated by the Brownian motion $W$. A backward SDE is an equation of the form
\begin{equation}\label{eq.1}
	Y_t = \xi + \int_{t}^{T} f(s, Y_s, Z_s)\, ds - \int_{t}^{T} Z_s\, dB_s, \quad t \in [0,T].
\end{equation}
The terminal condition $\xi$ is an $\mathcal{F}_T$-measurable random variable taking values in $\mathbb{R}^k$, and the generator $f$ (or coefficient) is a stochastic function defined on $[0, T] \times \Omega \times \mathbb{R}^k \times \mathbb{R}^{k \times d}$, taking values in $\mathbb{R}^k$, and measurable with respect to the $\sigma$-algebra $\mathcal{P} \otimes \mathcal{B}(\mathbb{R}^k) \otimes \mathcal{B}(\mathbb{R}^{k \times d})$, where $\mathcal{P}$ denotes the predictable $\sigma$-field on $\Omega \times [0,T]$.

Solving such an equation consists in finding a pair of processes $(Y_t, Z_t)_{t \leq T}$ that are $\mathbb{F}$-adapted and satisfy equation \eqref{eq.1}.

Subsequently, Peng \cite{PengPDE} provided a probabilistic interpretation of the solution to a semilinear PDE of parabolic or elliptic type, thereby generalizing the Feynman-Kac formula to nonlinear PDEs. Since then, the theory of BSDEs has developed rapidly due to its strong connections with various fields (see also \cite{ePardouxB}).

The introduction of an additional integral term, given by a Stieltjes integral with respect to a continuous and increasing process in the formulation of the BSDE \eqref{eq.1}, leads to a new class of backward stochastic differential equations, referred to as \textit{generalized} BSDEs (GBSDEs). This integration process represents the \textit{local time} of a diffusion process at the boundary. The aim of this extension is to provide a probabilistic representation of the solutions to systems of semilinear partial differential equations, either parabolic or elliptic, subject to nonlinear Neumann-type boundary conditions. This line of research was initiated by Pardoux and Zhang in \cite{Pardoux Z}, where, in the same probabilistic framework as that considered for the classical BSDE \eqref{eq.1}, the generalized formulation takes the following form:
\begin{equation}\label{eq.2}
	Y_t = \xi + \int_t^T f(s, Y_s, Z_s)\, ds + \int_t^T g(s, Y_s)\, dA_s - \int_t^T Z_s\, dB_s, \quad t \in [0,T],
\end{equation}
where $(A_t)_{t \in [0,T]}$ is an increasing process associated with the boundary local time, and $(Y, Z)$ denotes the solution of the generalized BSDE.

	In their paper, the authors establish the existence and uniqueness of the solution under monotonicity assumptions on the generators $f$ and $g$ with respect to the variable $y$, a Lipschitz condition on $f$ with respect to $z$, as well as appropriate linear growth and square integrability conditions. Moreover, they place the GBSDE \eqref{eq.2} in a Markovian framework, where the randomness is driven by a reflected diffusion process described by the following stochastic differential equation:
	\begin{equation*}
		\left\lbrace 
		\begin{aligned}
			X^x_t &= x + \int_0^t b(X^x_s)\, ds + \int_0^t \sigma(X^x_s)\, dB_s + \int_0^t \nabla \Phi(X^x_s)\, dA^x_s, \quad t \in [0,T],\\
			dA^x_t &= \mathds{1}_{\{X^x_t \in \partial G\}}\, dA^x_t,
		\end{aligned}
		\right.
	\end{equation*}
	where $\Phi$ is a function of class $\mathcal{C}^2_b(\mathbb{R}^l)$, $b$ and $\sigma$ are Lipschitz continuous deterministic functions, $G$ is a bounded open subset of $\mathbb{R}^l$, and $\partial G$ denotes its boundary, defined via $\Phi$ by $G = \{\Phi > 0\}$ and $\partial G = \{\Phi = 0\}$. For any $x \in \partial G$, the vector $\nabla \Phi(x)$ is assumed to point inward, defining the inward normal derivative:
	$$
	\frac{\partial}{\partial n} = \langle \nabla \Phi(x), D_x \cdot \rangle = \sum_{i=1}^l \frac{\partial \Phi}{\partial x_i}(x) \frac{\partial}{\partial x_i}.
	$$
	
	The authors then show that, under this Markovian framework, the process $(Y_t)_{t \leq T}$ solving the GBSDE \eqref{eq.2} provides a probabilistic representation of the solution to the following partial differential equation with a Neumann-type boundary condition:
	\begin{equation*}
		\left\{
		\begin{aligned}
			-\frac{\partial u}{\partial t}(t,x) - {\mathcal{L}} u(t,x) - f(t,x,u(t,x), (\nabla u \, \sigma^\top)(t,x)) &= 0, \quad &\text{for } (t,x) \in [0,T] \times G,\\
			u(T,x) &= H(x), \quad &\text{for } x \in G,\\
			\frac{\partial u}{\partial n}(t,x) + g(t,x,u(t,x)) &= 0, \quad &\text{for } x \in \partial G,
		\end{aligned}
		\right.
	\end{equation*}
	where $\mathcal{L}$ is the second-order partial differential operator
	$$
	\mathcal{L} = \frac{1}{2} \sum_{i, j=1}^d (\sigma \sigma^*)_{ij}(x) \frac{\partial^2}{\partial x_i \partial x_j} + \sum_{i=1}^d b_i(x) \frac{\partial}{\partial x_i}.
	$$
	
	The extension of discontinuous generalized BSDEs to a filtration generated by both a Brownian motion $(B_t)_{t \leq T}$ and a Poisson random measure $N$ on $[0,T] \times E$, with $E = \mathbb{R}^{\ell} \setminus \{0\}$ for some strictly positive integer $\ell$, was studied by Pardoux in \cite{Pardoux}. In this work, the author proves that the following equation:
	\begin{equation}\label{eq.3}
		Y_t = \xi + \int_t^T f(s, Y_s, Z_s, V_s)\, ds + \int_t^T g(s, Y_s)\, dA_s - \int_t^T Z_s\, dB_s - \int_t^T \int_E V_s(e)\, \tilde{N}(ds,de), \quad t \in [0,T],
	\end{equation}
	admits a unique solution $(Y,Z,U)$ under assumptions similar to those in \cite{Pardoux Z}, namely monotonicity and appropriate growth conditions on $f$ and $g$, as well as a Lipschitz condition on $f$ with respect to the variable $u$.
	
	Furthermore, GBSDEs driven by Lévy processes were considered by El Otmani in \cite{ELOTMANI G}, where the author established a theoretical framework guaranteeing the existence and uniqueness of the solution. In addition, an explicit connection was made with a class of integro-partial differential equations subject to Neumann boundary conditions, thereby extending the applicability of GBSDEs to settings involving discontinuous dynamics induced by Lévy jumps.
	
	In a more general filtration setting, Elmansouri and El Otmani \cite{ELMandELO} studied GBSDEs where the filtration \textit{supports} a Brownian motion and an independent integer-valued random measure. Specifically, the authors established existence and uniqueness results for the following BSDE:
	\begin{equation}\label{eq.4}
		Y_t = \xi + \int_t^T f(s, Y_s, Z_s, V_s)\, ds + \int_t^T g(s, Y_s)\, dA_s - \int_t^T Z_s\, dB_s - \int_t^T \int_E V_s(e)\, \tilde{N}(ds,de) - \int_{t}^{T} dM_s,~t \in [0,T],
	\end{equation}
	where $(M_t)_{t \leq T}$ is a square-integrable martingale orthogonal to both $W$ and $N$, under the same assumptions on the drivers as in \cite{ELOTMANI G, Pardoux, Pardoux Z}, thus generalizing and improving those results. This result was further extended in a more general setting, where the noise is driven by a general RCLL martingale, by the same authors in \cite{ELMandELOVMSTA}.
	
	It is worth noting that in the proofs of the existence results for the GBSDEs \eqref{eq.2} and \eqref{eq.3}, the main tool is the martingale representation theorem, which holds in both cases: when the filtration is generated by a Brownian motion alone or jointly with an independent integer-valued random measure. However, for more general filtrations, this property does not hold, and an additional martingale term must be included in the definition of the solution. Therefore, in the GBSDE \eqref{eq.4}, the solution is a quadruplet $(Y, Z, V, M)$ rather than the classical triplet used in \eqref{eq.3}. This additional stochastic component introduces further complexity to the analysis, as it must also be properly controlled.

	Generalized reflected BSDEs (GRBSDEs, for short) with one continuous barrier were first studied by Ren and Xia \cite{Ren and Xia} in the Brownian setting following the work by El Karoui et al. \cite{ElKaroui}, and later extended to the case with Poisson jumps by Elhachemy and El Otmani \cite{HACHEL}, motivated by obstacle problems for integro-PDEs with nonlinear Neumann boundary conditions. Ren and El Otmani \cite{Ren El Otmani} further investigated GRBSDEs driven by a Lévy process within the same PDE framework. In the case of a filtration generated by a Brownian motion and an independent Poisson random measure, such equations take the following form:
	\begin{equation}\label{eq.5}
		\left\{
		\begin{split}
			\text{ } &~Y_t= \xi+\int_t^T f(s,Y_s,Z_s,V_s)\,ds+\int_t^T g(s,Y_s)\,\mathrm{d}A_s+K_T-K_t-\int_t^T Z_s\, d B_s \\
			&\quad -\int_t^T \int_E V_s(e)\,\tilde{N}(ds,de),\quad 0 \leq t \leq T. \\
			\text{ } &~ Y_t \geq L_t,\ t\leq T ~~ \text{ and } ~~ \int_0^T (Y_{t-}-L_{t-})\,dK_t = 0.
		\end{split}
		\right.
	\end{equation}
	
	The Skorokhod condition in equation \eqref{eq.5} ensures that the first component $Y$ of the solution remains above the barrier $L$. It is worth noting that, in this setting, the jumps in the solution may be either inaccessible arising from the stochastic integral with respect to $N$ or predictable, resulting from the predictable negative jumps of the barrier process $L$. The process $K$ acts as a minimal non-decreasing compensator that pushes $Y$ upwards when it tends to fall below $L$, as specified by the minimality condition: the continuous part $K^c$ of $K$ increases only when $Y = L$, and the jump part $K^d$ is activated on the random set $\{Y_{-} = L_{-}\} \cap \{\Delta L < 0\}$ to compensate for negative predictable jumps in $L$.
	
	To the best of our knowledge, there are no existing results on the existence and uniqueness of such GRBSDEs under a general filtration that supports both a Brownian motion and an independent integer-valued random measure, satisfying the monotonicity assumptions on the generators as stated in \cite{ELOTMANI G, Pardoux, Pardoux Z} for classical GBSDEs and in \cite{HACHEL} for the reflected case. Establishing well-posedness results for this type of reflected GBSDE would enable us to tackle more general problems, such as the stochastic monotonic case, and to prove existence and uniqueness results for doubly reflected GBSDEs by analyzing the convergence of penalized schemes. These penalizations both increasing and decreasing rely on the notion of local solutions, following the arguments developed by Elmansouri \cite{BadrPJM} and Hamadène and Wang \cite{hamadenewang}. This result can then be used to treat GBSDEs with stochastic monotonic generators in this general framework.
	
	Since the construction of such local solutions for doubly reflected BSDEs naturally leads to the study of a reflected generalized BSDE in a general filtration setting and given the absence of results on the existence and uniqueness of GRBSDEs in this context, we are motivated to address this gap. In this paper, we study GRBSDEs in a general filtration, providing existence and uniqueness results under monotonicity assumptions on the generators and in the presence of a reflecting obstacle that may exhibit both predictable and totally inaccessible jumps. Our method is based on a penalization approach, combined with the Snell envelope and tools from optimal stopping theory. We also derive several useful a priori estimates for the solution and establish a link between this type of GRBSDE and an associated optimal stopping problem, characterizing the solution process as the value function of this problem. Furthermore, we present comparison principles that play a key role in analyzing the convergence of penalized schemes for GRBSDEs toward the solution of the corresponding doubly reflected GBSDE. For further related and interesting contributions, we refer the reader to \cite{CRP,Cvitanic,HACHEL2,AMR,StochDyn,OtmaniR,El Otmani and Mohamed,Essaky,ESAS,FakhouriOR,HamadeneoneR,HamadeneHass,HamadeneO,HOKN}, among others.
	
	The rest of the paper is organized as follows: In Section~\ref{PRLM}, we present the preliminary notions and assumptions used throughout the paper. Section~\ref{Generalized BSDEs with one reflecting barrier : Existence and uniqueness result} is devoted to the existence and uniqueness results for GRBSDEs with a single reflecting barrier, allowing jumps of general structure, under monotonicity conditions on the coefficients. A comparison principle is also established. In Section~\ref{app}, we provide an application of this type of BSDE in the context of an optimal control problem over the set of stopping times.

	\section{Preliminaries}
	\label{PRLM}
	All processes are defined on a probability space $\left( \Omega,\mathcal{F},\mathbb{P} \right)$ equipped with a filtration $\mathbb{F}=\left( \mathcal{F}_t \right)_{t \geq 0}$ carrying a $d$-dimensional Brownian motion $\{ B_t , t \geq 0 \}$ and an independent random measure $N$ on $\mathbb{R}^{+} \times E$. The filtration $\mathbb{F}$ is assumed to be complete, right continuous, and quasi-left continuous, meaning that for every sequence $\left\lbrace \tau_n\right\rbrace_{n \in \mathbb{N}}$ of $\mathbb{F}$-stopping times such that $\tau_n \nearrow {\tau}$ a.s. for some stopping time ${\tau}$, we have $\bigvee_{n \in \mathbb{N}} \mathcal{F}_{\tau_n} = \mathcal{F}_{\tau}$. We set $E=\mathbb{R}^{\ell}\setminus\{0\}$ for some $\ell \in \mathbb{N}^{+}$, equipped with its Borel field $\mathcal{E}$. We always assume that the random measure $N$ is an integer-valued random measure on $(\mathbb{R}_{+} \times E,\mathcal{B}(\mathbb{R}^{+}) \otimes \mathcal{E})$ with the compensator
	$\upsilon(\omega; dt  , de )=Q(\omega,t; de )\eta(\omega,t) dt  $ where  $\eta : \Omega \times \mathbb{R}_{+} \rightarrow [0,\infty)$ is a predictable process and $Q$ is a kernel from $\left(\Omega \times \mathbb{R}_{+}, \mathcal{P}  \right)$ into $(E,\mathcal{E})$ satisfying $\int_0^t \int_E \left| e \right|^2 Q(t, de )\eta(t) dt  <\infty$. We also set $N (\{0\},E)=N((0,\infty),\{0\})=\upsilon((0,\infty),\{0\})=0$. The quadratic variation of a martingale $M$ is defined by $[M]$. The notation $\left\langle M^c\right\rangle =[M]^c$ will denote the continuous part of the quadratic variation $[M]$. The Euclidean norm of a vector $z \in \mathbb{R}^k$ will be defined by $\left\| z \right\|^2=\sum_{i=1}^k \left| z_i \right|^2$.\\
	Let $\eta_1$ and $\eta_2$ by two $[0,T]$-valued $\mathbb{F}$-stopping times such that $\eta_1 \leq \eta_2$ a.s. By $\mathcal{T}_{\eta_1}^{\eta_2}$, we denote the set of $\mathbb{F}$-stopping times $\eta$ such that $\eta_1 \leq \eta \leq \eta_2$ a.s. The equality $X=Y$ between any two processes $(X_t)_{t \geq 0}$ and $(Y_t)_{t \geq 0}$ must be understood in the indistinguishable sense, meaning that $\mathbb{P}\left(\omega:X_t(\omega)=Y_t(\omega), \forall t \geq 0\right)=1$. The same interpretation applies to $X \leq Y$. While these relations are understood in the almost sure sense for random variables. For a given RCLL process $(Y_t)_{t \geq 0}$, $Y_{t-}=\lim\limits_{s \nearrow t } Y_s$ denotes the left limit of $Y$ at $t$. We set $Y_{0-}=Y_0$ by convention, and $Y_{-}=(Y_{t-})_{t \leq T}$ represents the left-limited process. Moreover, $\Delta Y_t=Y_t-Y_{t-}$ denotes the jump of $Y$ at time $t \geq 0$. For $\mathbb{F}$-progressively measurable RCLL processes $\left\lbrace Y^n\right\rbrace _{n \in \mathbb{N}}$ and $Y$, we say that $Y^n \rightarrow Y$ in \textsc{UCP} (uniformly on compacts in probability) if $\sup_{0 \leq s \leq t} \left| Y^n_s -Y_s \right|^2 \rightarrow 0$ in probability $\mathbb{P}$ for every $t >0$. Additionally, for $x \in \mathbb{R}$, we have $x^{+}=\max(x,0)$ and $x^{-}=\max(-x,0)=-\min(x,0)$.
	
	Let $\tilde{\mathcal{P}} = \mathcal{P} \otimes \mathcal{E}$ be the $\sigma$-field on $\Omega \times \mathbb{R}_{+} \times E$, where $\mathcal{P}$ designates the $\sigma$-algebra on $\Omega \times [0,T]$ generated by all $\mathbb{F}$-adapted left-continuous processes. Let's denote:
	\begin{itemize}
		\item $G_{\text{loc}}(N)$: the set of  $\tilde{\mathcal{P}}$-measurable functions $V$ on $\Omega \times \mathbb{R}_{+}\times E$ such that for any $t \leq T$ a.s.
		\begin{equation*}
			\int_0^t \int_E \left( \left| V_s(e) \right|^2 \wedge \left| V_s(e) \right|  \right) Q(s, de )\eta(s) ds < \infty.
		\end{equation*}
		\item $L^2_{\text{loc}}(B)$: the set of all $\mathcal{P}$-measurable processes $Z$ such that a.s.
		$\int_0^T \| Z_s \|^2  ds <\infty$.
		\item $\mathbb{M}_{\text{loc}}$: the set of RCLL local martingales orthogonal to $B$ and $\tilde{N}$. If $M \in \mathbb{M}_{loc}$ then
		$\left[ M,W^i \right]^c_t=0, ~ 1 \leq i \leq d$ and $\left[M,\tilde{N}(\cdot,\mathcal{A})  \right]_t=0$, for all $\mathcal{A} \in \mathcal{E}$.
		\item $\mathbb{L}^2_Q=L^2(E;\mathbb{R}^k)$: the set of measurable function $\phi : E \rightarrow \mathbb{R}^k$ such that 
		\begin{equation*}
			\left\| \phi \right\|^2_{Q} =  \int_{E} \left| \phi(e) \right|^2 Q(t, de )\eta(t)< \infty, ~ \mbox{ for almost every } (t,\omega)\in [0,T] \times \Omega.
		\end{equation*}
	\end{itemize}
	
	For a fixed deterministic horizon time $T > 0$, a parameter $\mu > 0$, a Hilbert space $H$, and a continuous increasing $\mathbb{F}$-adapted process $(A_t)_{0 \leq t \leq T}$, we introduce the following spaces of processes:
	\begin{itemize}
		\item[{$\bullet$}] $M^2_{\mu}(A;H)$: the set of $H$-valued progressively measurable processes $X:=\left\lbrace X_t ; 0 \leq t \leq T  \right\rbrace$, which are such that
		\begin{equation*}
			\mathbb{E}\left[ \int_0^T e^{\mu A_t} \| X_t \|^2_H  dt  +\int_0^T e^{\mu A_t} \| X_t \|^2_H  dA  _t  \right]<\infty.
		\end{equation*}
		\item[{$\bullet$} ]$M^{2,A}_{\mu}(H)$: the same space $M^2_{\mu}(A;H)$  without the first term, i.e the same space, where we require only that
		\begin{equation*}
			\| X  \|_{M^{2,A}_{\mu}(H)}^2=\mathbb{E}\left[ \int_0^T e^{\mu A_t} \| X_t \|^2_H  dA_t    \right]<\infty
		\end{equation*}
		(with the convention $M^{2,A} :=M^{2,A}_{0}$).
		\item[{$\bullet$} ]$M^2_{\mu}(H)$ : the same space $M^2_{\mu}(A;H)$  without the second term, i.e the same space, where we require only that
		\begin{equation*}
			\| X  \|_{M^2_{\mu}(H)}^2=\mathbb{E}\left[ \int_0^T e^{\mu A_t} \| X_t \|^2_H  dt    \right]<\infty
		\end{equation*}
		(with the convention $M^2(H):=M^2_{0}(H)$).
		\item[{$\bullet$}] $\mathbb{M}^2_{\mu}$: the subspace of $\mathbb{M}_{loc}$ of  martingales $M:=\left\lbrace M_t ; 0 \leq t \leq T  \right\rbrace$  such that
		\begin{equation*}
			\| M  \|_{\mathbb{M}^2_{\mu}}^2=\mathbb{E}\left[ \int_0^T e^{\mu A_t}  d[M]_t \right] < \infty
		\end{equation*}
		(with the convention  $\mathbb{M}^2:=\mathbb{M}^2_{0}$).
		\item[{$\bullet$}] $\mathcal{S}^2_{\mu}(A;\mathbb{R})$: the set of $\mathbb{R}$-valued, $\mathbb{F}$-adapted RCLL processes $(Y_t)_{0 \leq t \leq T}$ such that
		$$
		\| Y \|^2_{\mathcal{S}^2_{\mu}(A;\mathbb{R})} =\mathbb{E}\left[ \sup_{0 \leq s \leq T} e^{\mu  A_s} \left| Y_s \right|^2
		+\int_0^T e^{\mu  A_s} \left| Y_s \right|^2  dA_s \right]<\infty.
		$$
		\item[{$\bullet$}] $\mathcal{S}^2_{\mu}$: the same space of $\mathcal{S}^2_{\mu}(A;\mathbb{R})$ without the second term, i.e the same space, where we require only that
		$$
		\| Y \|^2_{\mathcal{S}^2_{\mu}}=\mathbb{E}\left[ \sup_{0 \leq s \leq T} e^{\mu  A_s} \left| Y_s \right|^2 \right]<\infty.
		$$
		(with the convention  $\mathcal{D}^2:=\mathcal{S}^2_{0}$).
		\item[{$\bullet$}] $\mathcal{S}^2$: the set of $\mathbb{R}$-valued, $\mathbb{F}$-predictable RCLL  increasing processes $(K_t)_{0 \leq t \leq T}$ such that $K_0=0$ and $\mathbb{E}\left[  \left| K_T \right|^2 \right]<\infty$.
		\item[{$\bullet$}] $L^2_{\text{loc},\mu}(B)$: the set of $Z \in G_{\text{loc}}(N)$ such that a.s.
		$\int_0^T e^{\mu A_s}\left\| Z_s \right\|^2  ds <\infty$.
		\item[{$\bullet$}] $G^2_{\text{loc},\mu}(N)$: the set of $V \in G_{\text{loc}}(N)$ such that a.s.
		$\int_0^T e^{\mu A_s}\int_E \left| V_s(e) \right|^2_Q  ds <\infty$.
		\item[{$\bullet$}] $\mathbb{M}^2_{\text{loc},\mu}$: the set of $M \in \mathbb{M}_{\text{loc}}$ such that a.s.
		$\int_0^T e^{\mu A_s} d\left[M\right]_s <\infty$.
	\end{itemize}
	
	Since we are dealing with a general filtration $\mathbb{F}$, we recall the following Lemma which gives the representation property of a local martingale:
	\begin{lemma}[Lemma III.4.24 in \cite{jacodshiryaev}]
		Every $\mathbb{F}$-local martingale has a decomposition
		\begin{equation*}
			\int_0^{\cdot} Z_s  dB   _s +\int_0^{\cdot} \int_E V_s(e)\tilde{N}( ds , de )+M,
		\end{equation*}
		where $M \in \mathbb{M}_{loc}$, $Z \in L^2_{loc}(B)$ and $V \in G_{loc}(N)$.
		\label{Lemma for the representation in general}
	\end{lemma}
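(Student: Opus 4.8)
The plan is to obtain the decomposition by orthogonal projection in the Hilbert space $\mathcal{H}^2$ of square-integrable martingales, in the spirit of the Galtchouk--Kunita--Watanabe decomposition, and to remove the square-integrability afterwards by localization. First I would reduce to the locally square-integrable case: by splitting off the (locally finite-variation) compensated sum of the large jumps and localizing, it suffices to produce the decomposition for $L \in \mathcal{H}^2$ with $L_0 = 0$, and then to pass to the limit along a localizing sequence of stopping times, checking that the resulting integrands lie in $L^2_{loc}(B)$, $G_{loc}(N)$ and that the residual is in $\mathbb{M}_{loc}$.

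On $\mathcal{H}^2$, equipped with the inner product $(L,L')\mapsto \mathbb{E}[\langle L,L'\rangle_T]$, the families $\mathcal{L}(B) := \{\int_0^\cdot Z_s\,dB_s\}$ and $\mathcal{L}(\tilde N) := \{\int_0^\cdot\int_E V_s(e)\,\tilde N(ds,de)\}$ are stable (hence closed) subspaces. Because $B$ is continuous while $\tilde N$ is purely discontinuous, and because $N$ is independent of $B$ --- so that $B$ remains a Brownian motion and $\tilde N$ its compensated measure in $\mathbb{F}$ --- these two subspaces are mutually orthogonal. The sought decomposition is then nothing but the orthogonal projection of $L$ onto the closed direct sum $\mathcal{L}(B)\oplus\mathcal{L}(\tilde N)$, with $M$ the residual.

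To realize the projections explicitly, for the Brownian part I would use that $\langle B^i,B^j\rangle_s=\delta_{ij}\,s$: the Kunita--Watanabe inequality shows each $\langle L,B^i\rangle$ is absolutely continuous with respect to $ds$, so $Z^i_s := d\langle L,B^i\rangle_s/ds$ defines a predictable process in $L^2_{loc}(B)$ for which $L-\int_0^\cdot Z_s\,dB_s$ has vanishing bracket against every $B^i$. For the jump part I would extract $V$ from the jumps of $L$ sitting on the atoms of $N$, via a Radon--Nikodym / predictable-projection argument against the Dol\'eans measure of $N$; the integration theory for integer-valued random measures then guarantees $V \in G_{loc}(N)$ and that $\int_0^\cdot\int_E V_s(e)\,\tilde N(ds,de)$ is the orthogonal projection of $L$ onto $\mathcal{L}(\tilde N)$.

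Finally, setting $M := L - \int_0^\cdot Z_s\,dB_s - \int_0^\cdot\int_E V_s(e)\,\tilde N(ds,de)$, orthogonality forces $[M,B^i]^c=0$ for every $i$ and $[M,\tilde N(\cdot,\mathcal{A})]=0$ for every $\mathcal{A}\in\mathcal{E}$, i.e.\ $M\in\mathbb{M}_{loc}$, and delocalization yields the general statement. I expect the main obstacle to be the clean construction of the jump integrand $V$ together with the proof that it belongs to $G_{loc}(N)$, and the rigorous verification that $\mathcal{L}(B)\perp\mathcal{L}(\tilde N)$ --- this is precisely where the independence of $N$ and $B$ and the purely discontinuous nature of $\tilde N$ enter; the Brownian part and the identification of the residual $M$ are comparatively routine.
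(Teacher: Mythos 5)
The paper itself offers no proof of this lemma: it is quoted verbatim as Lemma III.4.24 of \cite{jacodshiryaev}, so your attempt can only be measured against the argument given there. Your core strategy --- Galtchouk--Kunita--Watanabe projection in $\mathcal{H}^2$, with $Z^i$ read off from $d\langle L,B^i\rangle_s/ds$ via Kunita--Watanabe, and $V$ obtained by a Radon--Nikodym argument against the Dol\'eans measure of $N$ --- is the standard one and is sound for \emph{locally square-integrable} local martingales. Two smaller corrections along the way: the orthogonality of $\mathcal{L}(B)$ and $\mathcal{L}(\tilde N)$ follows from continuity versus pure discontinuity alone, so the independence of $B$ and $N$ plays no role there; and strong orthogonality of the residual yields that $[M,\tilde N(\cdot,\mathcal{A})]$ is a \emph{local martingale}, not that it vanishes identically --- a residual can perfectly well jump at atoms of $N$ (with jumps whose predictable projection on $\tilde{\mathcal{P}}$ vanishes), which is also how the orthogonality defining $\mathbb{M}_{loc}$ must be read for the lemma to be true.

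The genuine gap is your reduction step. You split off the compensated sum of large jumps, say $L''$, and assert that it then ``suffices'' to decompose an element of $\mathcal{H}^2$. It does not: $L''$ is itself a local martingale that still has to be decomposed, and it is exactly the part your method cannot reach, because it is purely discontinuous of locally integrable variation but in general \emph{not} locally square-integrable --- no localizing sequence of stopping times tames a single jump that fails to be square-integrable. Nor can $L''$ be absorbed into the residual $M$: its jumps generically sit on the atoms of $N$ (indeed, when the filtration is the one generated by $B$ and $N$, $L''$ is itself of the form $V''\ast\tilde N$ with $V''\in G_{loc}(N)$ non-square-integrable), so leaving it in $M$ destroys the orthogonality to $\tilde N$. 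This is precisely why the statement is formulated with $V\in G_{loc}(N)$, i.e. integrability of $|V|^2\wedge|V|$, rather than with square-integrable integrands: the integrands reachable by $\mathcal{H}^2$-projection form a strictly smaller class, and the force of the lemma is that it covers \emph{every} local martingale. The repair is the route actually taken in \cite{jacodshiryaev}: treat the continuous martingale part $L^c$ by Kunita--Watanabe against $B$ (as you do), but for the purely discontinuous part define $V:=M^{\mathbb{P}}_{\mu}\left[\Delta L\mid\tilde{\mathcal{P}}\right]$ directly for the whole local martingale, prove $V\in G_{loc}(N)$ from the local integrability of $[L,L]^{1/2}$ (Davis' inequality), and only then form $V\ast\tilde N$ and the residual; no passage through $\mathcal{H}^2$, and hence no localization argument of the kind you propose, is ever made.
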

	\paragraph*{Assumptions on the data $(\xi,f,g,A,L,U)$}
	We are giving:
	\begin{itemize}
		\item[\textbf{(H1) }] A final condition  $\xi$ which is an $\mathcal{F}_T$-measurable random variable that satisfies
		\begin{equation*}
			\| \xi \|^2_{\mathbb{L}^2_{\mu}}=\mathbb{E}\left[ e^{\mu A_T}\left| \xi \right|^2 \right] < \infty.
		\end{equation*}
		
		\item[\textbf{(H2) }] Two coefficients $f : \Omega \times [0,T] \times \mathbb{R} \times \mathbb{R}^d \times \mathbb{L}^2_Q \rightarrow \mathbb{R}$ and $g : \Omega \times [0,T] \times \mathbb{R} \rightarrow \mathbb{R}$, satisfying for some constants $\alpha \in \mathbb{R}$, $\beta < 0$, $\kappa >0$, some $[1,\infty)$-valued $\mathbb{F}$-adapted processes $\{ \varphi_t,\psi_t, 0 \leq t \leq T \}$ and all $(t,y,z,v),(t,y^{\prime},z^{\prime},v^{\prime})\ \in [0,T] \times \mathbb{R}^{1+d} \times \mathbb{L}^2_Q$,
		\begin{itemize}
			\item[(i) ] $f(\cdot,y,z,v)$ and $g(\cdot,y)$ are two $\mathbb{F}$-progressively measurable processes,
			\item[(ii) ] $\mathbb{E}\left[  \displaystyle\int_0^T e^{\mu A_t} \varphi^2_t dt + \displaystyle\int_0^T e^{\mu A_t} \psi^2_t dA_t     \right]  <\infty $,
			\item[(iii) ] $\left( y-y^{\prime} \right)\left( f(t,y,z,v)-f(t,y^{\prime},z,v) \right) \leq \alpha \left| y-y^{\prime} \right|^2$,
			\item[(iv) ] $\left( y-y^{\prime} \right) \left( g(t,y)-g(t,y^{\prime}) \right) \leq \beta \left| y-y^{\prime} \right|^2$,
			\item[(v) ] $\left| f(t,y,z,v)-f(t,y,z^{\prime},v^{\prime}) \right| \leq \kappa \left( \left\| z-z^{\prime} \right\|+\|   v-v^{\prime}\|_Q \right)$,
			\item[(vi) ] $\left| f(t,y,0,0)   \right| \leq \varphi_t +\kappa  \left|y \right|$ and $\left| g(t,y)   \right| \leq \psi_t+ \kappa \left| y  \right|$,
			\item[(vii) ] $y \mapsto f(t,y,z,v)$ and $y \mapsto g(t,y)$ are continuous, for all $(z,v)\in \mathbb{R}^d \times \mathbb{L}^2_Q$, $\mathbb{P}$-a.s.
		\end{itemize}
		
		\item[\textbf{(H3) }] The reflecting barrier $L:=\left(L_t\right)_{0 \leq t \leq T}$ is an $\mathbb{F}$-progressively measurable, RCLL process  satisfying:
		\begin{itemize}
			\item[$\bullet$] $\mathbb{P}$-a.s.  $L_T \leq \xi$,
			\item[$\bullet$] $\mathbb{E}\left[ \sup_{0 \leq t \leq T} \left| e^{\mu A_t} L_t \right|^2  \right]<\infty $.
		\end{itemize}
	\end{itemize}
\begin{remark}
	\begin{itemize}
		\item[$\bullet$] Let $\left(Y_t\right)_{0 \leq t \leq T}$ be some RCLL process in the space $M^{2,A}_{\mu}(\mathbb{R})$. Using the Cauchy-Schwarz inequality, an integration by parts formula, and the linear growth of the generator $g$ (assumption \textbf{(H2)}-(vi)), we may easily derive for any $\mu >0$ and $t \in [0,T]$,
		$$
		\mathbb{E}\left[ \left( \int_{0}^{t}g(s,Y_s) dA_s\right)^2\right]  \leq \frac{2}{\mu}\mathbb{E}\left[  \int_{0}^{t}e^{\mu A_s} \left\lbrace \left|\psi_s\right|^2+\kappa^2 \left|Y_s\right|^2 \right\rbrace dA_s  \right].
		$$
		Similarly, under assumption \textbf{(H2)}-(vi), we can derive for any $\gamma>0$,
		$$
		\mathbb{E}\left[\left( \int_{0}^{t}f(s,Y_s,0,0) ds \right)^2\right]  \leq\frac{2}{\gamma}\mathbb{E}\left[  \int_{0}^{t}e^{\gamma s} \left\lbrace \left|\varphi_s\right|^2+\kappa^2 \left|Y_s\right|^2 \right\rbrace ds \right]. 
		$$
		
		
		\item[$\bullet$] Similarly, exploiting the fact that $\psi$ is a process with values in $[1,\infty)$ and Hypothesis \textbf{(H2)}-(ii), we obtain:
		$$
		\mathbb{E}\left[e^{\mu A_T}\right] \leq 1+\mu \mathbb{E}\left[\int_{0}^{T}e^{\mu A_s} \psi^2_s dA_s\right]<\infty.
		$$
	\end{itemize}
	\label{Rmq 1}
\end{remark}
\begin{remark}
	According to Proposition 10.19 in \cite{kallenberg1997foundations}, the quasi-left continuity of the filtration ensures that the additional martingale component arising from Lemma \ref{Lemma for the representation in general} is also quasi-left continuous. In particular, due to the continuity of the dual predictable projection of the integer-valued random measure $N$, it follows that every $\mathbb{F}$-local martingale is quasi-left continuous.
\end{remark}

\begin{remark}
	Throughout this paper, $\mathfrak{c}$ will denote a constant that may vary from one line to another. Additionally, the notation $\mathfrak{c}_{\gamma}$ will be employed to emphasize the dependence of the constant $\mathfrak{c}$ on a specific set of parameters $\gamma$.
\end{remark}

	\section{Generalized reflected BSDEs with one RCLL barrier}
	\label{Generalized BSDEs with one reflecting barrier : Existence and uniqueness result}
	\subsection{Formulation}
	The aim of this section is to provide the existence and uniqueness results of a solution for the reflected generalized BSDEs (GRBSDE, for short) with one lower RCLL barrier associated with $(\xi,f,g,A,L)$. Namely, we consider the GRBSDE:
	\begin{equation}
		\left\{
		\begin{split}
			\text{(i)} &~Y_t= \xi+\int_t^T f(s,Y_s,Z_s,V_s)ds+\int_t^T g(s,Y_s)\mathrm{d}A_s+K_T-K_t-\int_t^T Z_s d B_s\\
			& \qquad  -\int_t^T \int_E V_s(e)\tilde{N}(ds,de)-\int_t^T dM_s, \quad 0 \leq t \leq T.\\
			\text{(ii)} &~ Y_t \geq L_t,\ t\leq T. \\
			\text{(iii)} &~  \text{ If } K^{c} \text{ is the continuous part of } K, \text{ then }
			\int_0^T (Y_t-L_t)dK^{c}_t=0.\\
			\text{(iv)} &~ \text{ If } K^{d} \text{ is the purely discontinuous part of } K,
			\text{ then } K^{d}_t=\sum_{0 < s \leq t}(Y_s-L_{s-})^{-}.
		\end{split}
		\right.
		\label{basic equation for one lower reflecting barrier--General case}
	\end{equation}
	
	\begin{remark}
		\begin{itemize}
			\item[(i)] The role of the predictable increasing processes $K=K^{c}+K^{d}$ is to keep the state process $Y$ above the lower reflecting barrier $L$ with minimal energy, meaning it acts only when necessary to prevent $Y$ from crossing the lower obstacle $L$. Specifically, the process $K=K^{c}+K^{d}$ increases only in two cases:
			
			\begin{enumerate}
				\item When the process $Y$ has a negative predictable jump, i.e., when $Y_{-}=L_{-}$, which occurs at a negative predictable jump point of $L$. In this situation, the role of the jump part $K^{d}$ is to make the necessary jump size to $Y$ to keep it above $L$. Hence, when $Y$ has a negative jump point at a predictable time $\tau$, we must have $\Delta K^{d}_{\tau}=(Y_{\tau}-L_{\tau-})^{-}\mathds{1}_{ \{ \Delta L_{\tau}<0 \} }$, and $ Y_{\tau-}=L_{\tau-}$.
				
				\item When the process $Y$ reaches the lower barrier $L$ and tries to prevent it, either in its continuity or in its right positive jump point. In this case, the process $K^{c}$ does the job and acts to keep $Y$ above $L$, satisfying $\int_0^T (Y_s-L_s)dK^{c}_s=0$.
			\end{enumerate}
			\item[(ii)] The Skorokhod condition \eqref{basic equation for one lower reflecting barrier--General case}-(iv) is equivalent to the following property:
			\begin{equation}
				\int_0^T (Y_{s-}-L_{s-})dK_s=0.
				\label{Skorohod condition}
			\end{equation}
			Indeed, if the points \eqref{basic equation for one lower reflecting barrier--General case}-(iii)-(iv) are satisfied, then
			\begin{equation*}
				\begin{split}
					& \int_0^T (Y_{s-}-L_{s-})dK_s=\int_0^T (Y_{s}-L_{s})dK^{c}_s+\int_0^T (Y_{s-}-L_{s-})dK^{d}_s=0.
				\end{split}
			\end{equation*}
			This is due to the fact that, on one hand, the processes $Y$ and $L$ are RCLL, and the process $K^{c}$ increases only when $Y_s=L_s$. On the other hand, the jumps of $K^{d}$ are predictable, and when they occur, we must have $Y_{s-}=L_{s-}$. Conversely, if $\int_0^T (Y_{s-}-L_{s-})dK_s=0$, then $\int_0^T (Y_{s}-L_{s})dK^{c}_s=0$, and $\int_0^T (Y_{s-}-L_{s-})dK^{d}_s=\sum_{0 < t \leq T} (Y_{s-}-L_{s-})\Delta K^{d}_s=0$. The last equality combined with the equation \eqref{basic equation for one lower reflecting barrier--General case}-(i) implies that $\Delta K^{d}_s=(L_{s-}-Y_s)^{+}\mathds{1}_{\{Y_{s-}=L_{s-}\}}$.
			\label{Propertie of the increasing processes for the lower case}
		\end{itemize}
	\end{remark}
	\subsection{Existence and uniqueness}
	The main result of the current section is illustrated in the following theorem:
	\begin{theorem}
		Under the conditions \textbf{(H1)}, \textbf{(H2)}, and \textbf{(H3)}, the GRBSDE \eqref{basic equation for one lower reflecting barrier--General case} has a unique solution $(Y,Z,V,K,M) \in \mathcal{S}^{2}_{\mu}(A;\mathbb{R}) \times M^2_{\mu}(\mathbb{R}^d) \times M^2_{\mu}(\mathbb{L}^2_Q) \times \mathcal{S}^2 \times \mathbb{M}_{\mu}^2$.
		\label{Theorem : GRBSDE with one lower barrier--Main results}
	\end{theorem}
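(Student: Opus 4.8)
The plan is to establish uniqueness first, by an energy estimate, and then existence by a penalization scheme whose monotone limit is identified through optimal stopping theory. For uniqueness I would take two solutions $(Y^i,Z^i,V^i,K^i,M^i)$, $i=1,2$, set $\delta Y=Y^1-Y^2$, and apply the It\^o formula to $e^{\mu A_t}|\delta Y_t|^2$ on $[t,T]$. The monotonicity assumptions \textbf{(H2)}-(iii)--(iv), the Lipschitz property \textbf{(H2)}-(v), and the strict sign $\beta<0$ let me absorb the drift and Stieltjes contributions after choosing $\mu$ large enough, while the Brownian, random-measure and orthogonal-martingale integrals produce the quadratic terms in $\|Z^1-Z^2\|$, $\|V^1-V^2\|_Q$ and $d[M^1-M^2]$. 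The only delicate term is the reflection cross-term $\int(\delta Y)_{s-}\,d(K^1-K^2)_s$; using $\int(Y^i_{s-}-L_{s-})\,dK^i_s=0$ from \eqref{Skorohod condition} together with $Y^{3-i}\ge L$, this term is nonpositive, which is precisely the sign needed. A Gronwall argument then forces $\delta Y\equiv 0$, the isometry gives $Z^1=Z^2$, $V^1=V^2$ and $M^1=M^2$, and $K^1=K^2$ is read off from equation \eqref{basic equation for one lower reflecting barrier--General case}-(i).

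For existence I would introduce, for each $n$, the penalized generalized BSDE
\begin{equation*}
\begin{split}
Y^n_t={}&\xi+\int_t^T f(s,Y^n_s,Z^n_s,V^n_s)\,ds+\int_t^T g(s,Y^n_s)\,dA_s+n\int_t^T (Y^n_s-L_s)^-\,ds\\
&-\int_t^T Z^n_s\,dB_s-\int_t^T\int_E V^n_s(e)\,\tilde N(ds,de)-\int_t^T dM^n_s,
\end{split}
\end{equation*}
whose generator $f_n(s,y,z,v)=f(s,y,z,v)+n(y-L_s)^-$ is still monotone in $y$ (since $y\mapsto n(y-L_s)^-$ is nonincreasing) and, for each fixed $n$, satisfies the conditions \textbf{(H1)}--\textbf{(H2)} thanks to the integrability of $L^+$ in \textbf{(H3)}. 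Hence each penalized equation has a unique solution $(Y^n,Z^n,V^n,M^n)$ by the generalized BSDE well-posedness result of the authors cited for \eqref{eq.4}, and I set $K^n_t=n\int_0^t(Y^n_s-L_s)^-\,ds$, a continuous increasing process. Applying the It\^o formula to $e^{\mu A_t}|Y^n_t|^2$ and invoking the monotonicity conditions yields bounds on $\|Y^n\|_{\mathcal{S}^2_\mu(A;\mathbb{R})}$, $\|Z^n\|_{M^2_\mu(\mathbb{R}^d)}$, $\|V^n\|_{M^2_\mu(\mathbb{L}^2_Q)}$ and $\|M^n\|_{\mathbb{M}^2_\mu}$ uniform in $n$, while the comparison principle of this section gives $Y^n\le Y^{n+1}$, so $Y_t:=\lim_n Y^n_t$ exists and inherits the bound by monotone convergence.

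The crux, and the step I expect to be the main obstacle, is twofold. First one must secure a uniform bound $\sup_n\mathbb{E}[(K^n_T)^2]<\infty$; I would obtain this by expressing $K^n_T$ from the equation at $t=0$ and estimating each term with the uniform bounds just derived, which forces $\mathbb{E}[\int_0^T(Y^n_s-L_s)^-\,ds]\to0$ and hence $Y\ge L$ in the limit. Second, and more seriously, each $K^n$ is continuous whereas $L$ carries predictable downward jumps, so the limiting reflection process $K$ must split into a continuous part $K^c$ and a purely discontinuous predictable part $K^d$ as in \eqref{basic equation for one lower reflecting barrier--General case}-(iii)--(iv); the classical Doob--Meyer machinery is inadequate here. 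To recover the correct RCLL structure and the minimality/Skorokhod conditions, I would identify $Y$ as the value of an optimal stopping problem over $\mathcal{T}_t^T$ and exploit the Mertens decomposition of the associated strong supermartingale, which is available in the quasi-left-continuous general filtration.

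Finally, I would upgrade the convergences: the It\^o formula applied to $e^{\mu A_t}|Y^n-Y^m|^2$, combined with the uniform estimates, shows that $(Z^n,V^n,M^n)$ is Cauchy in $M^2_\mu(\mathbb{R}^d)\times M^2_\mu(\mathbb{L}^2_Q)\times\mathbb{M}^2_\mu$, with limits $(Z,V,M)$, and that $K^n$ converges to the process $K$ produced above. Passing to the limit in the penalized equation delivers \eqref{basic equation for one lower reflecting barrier--General case}-(i), the limit regularity gives (ii), and the optimal-stopping/Mertens identification secures (iii)--(iv), which completes the existence proof and, together with the first paragraph, the theorem.
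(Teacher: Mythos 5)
Your uniqueness argument is sound and matches the standard energy-estimate mechanism (the sign of the reflection cross-term via \eqref{Skorohod condition} and $Y^{3-i}\ge L$ is exactly right). The existence part, however, contains a genuine gap at its very foundation: you penalize with the \emph{full} driver, $f_n(s,y,z,v)=f(s,y,z,v)+n(y-L_s)^{-}$, and then assert that ``the comparison principle of this section gives $Y^n\le Y^{n+1}$.'' But the comparison result available under the hypotheses of the theorem (Theorem \ref{Version comparison theorem -- independ of v}) requires $f$ to be \emph{independent of $v$}; when the generator depends on $v$, comparison for BSDEs with jumps is simply false under the mere Lipschitz condition \textbf{(H2)}-(v) (this is the classical Barles--Buckdahn--Pardoux phenomenon), and it can only be restored under the additional structural hypothesis on increments in $v$ spelled out in Remark \ref{Remarks on the version 1 of comparison result}(3) (existence of a kernel $\gamma^{y,z,v,v'}\ge -1$ dominating the $v$-increments), which is \emph{not} among \textbf{(H1)}--\textbf{(H3)}. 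Without $Y^n\le Y^{n+1}$ your whole scheme collapses: there is no monotone limit $Y$, no Step with $(Y^n-L)^-\to 0$, and no identification of $K$. This is precisely why the paper splits the proof in two: Part 1 runs the penalization \emph{only} for a driver $\mathfrak{f}(t,y)$ independent of $(z,v)$, where the penalized generators $\mathfrak{f}_n(s,y)=\mathfrak{f}(s,y)+n(y-L_s)^{-}$ depend on $y$ alone and the comparison theorem legitimately applies; Part 2 then recovers the general $(y,z,v)$-dependence by a Banach fixed-point argument on $\mathcal{B}^2_{\gamma,\mu}$, freezing $(Z_t,V_t)$ in the driver, where no comparison is ever invoked and the contraction estimate instead rests on the inequality $\int_t^T\Phi^{\gamma,\mu}_s(\bar{Y}_{s-}-\bar{Y}^{\prime}_{s-})(d\bar{K}_s-d\bar{K}^{\prime}_s)\le 0$, proved by a careful separate treatment of the continuous and purely discontinuous parts of the reflection processes.

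To repair your proof you would either have to add the extra assumption of Remark \ref{Remarks on the version 1 of comparison result}(3) (thereby proving a weaker theorem), or restructure it along the paper's lines: penalize in the $(z,v)$-free case and then close the general case by contraction. A secondary, lesser issue: your treatment of the predictable jumps of $K$ via a Mertens decomposition of the Snell envelope is plausible in this quasi-left-continuous filtration, but as written it is only a pointer; the paper instead obtains the splitting $K=K^c+K^d$ and the Skorokhod conditions concretely, from the total-variation/weak convergence of the measures $dK^{n}$ together with the \textsc{UCP} convergence of the stochastic integrals, and it also needs a localization step (stopping at $\tau_k=\inf\{t:A_t\ge k\}\wedge T$) to upgrade the limit to the weighted spaces $\mathcal{S}^{2}_{\mu}(A;\mathbb{R})\times M^2_{\mu}(\mathbb{R}^d)\times M^2_{\mu}(\mathbb{L}^2_Q)\times\mathcal{S}^2\times\mathbb{M}^2_{\mu}$, a point your sketch passes over.
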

	\begin{proof}
		The proof of Theorem \ref{Theorem : GRBSDE with one lower barrier--Main results} is obtained in two main steps:
		\begin{enumerate}
			\item First, we study a special case of generalized reflected BSDEs where the coefficient $f$ depends only on $y$, i.e., $\mathfrak{f}(t,y):=f(t,y,z,v)$ for any $(t,y,z,v) \in [0,T] \times \mathbb{R} \times \mathbb{R}^d \times \mathbb{R}$.
			
			\item Secondly, using the result from the first part and a fixed-point argument in an appropriate Banach space, we establish the result in the general case where the driver $f$ depends on $(y,z,v)$.
		\end{enumerate}
		
		\paragraph*{Part 1: Case of a driver $f$ independent of $(z,v)$}
		The main finding of this part is stated as follows:
		\begin{theorem}\label{th f ind zv}
			Assume that $(\xi,\mathfrak{f},g,A)$ and $(L_t)_{0 \leq t \leq T}$ satisfies conditions \textbf{(H1)}, \textbf{(H2)}, and \textbf{(H3)}. The GRBSDE \eqref{basic equation for one lower reflecting barrier--General case} associated with $(\xi,\mathfrak{f},g,A,L)$ admits a unique solution $(Y,Z,V,K,M) \in \mathcal{S}^{2}_{\mu}(A;\mathbb{R}) \times M^2_{\mu}(\mathbb{R}^d) \times M^2_{\mu}(\mathbb{L}^2_Q) \times \mathcal{S}^2 \times \mathbb{M}_{\mu}^2$.
		\end{theorem}
		
		\begin{proof}
			The proof of Theorem \ref{th f ind zv} is performed in seven steps.
			\subparagraph*{Step 1: Construction of penalized generalized BSDE for the case when f does not depend on $(z,v)$}
			
			For each $n \in \mathbb{N}$, we consider the following penalized version of generalized BSDE,
			\begin{equation}
				\begin{split}
					& Y^n_t= \xi+\int_t^T \mathfrak{f}_n(s,Y^n_s)ds+\int_t^T g(s,Y^n_s)\mathrm{d}A_s-\int_t^T Z^n_s d B_s -\int_t^T \int_E V^n_s(e)\tilde{N}(ds,de)-\int_t^T dM^n_s, \quad 0 \leq t \leq T.
					\label{penalized version for the RBSDE with one lower barrier}
				\end{split}
			\end{equation}
			With $\mathfrak{f}_n(s,y)=\mathfrak{f}(s,y)+n(y-L_s)^{-}$ for each $n \in \mathbb{N}$. Using Theorem 4.3 from \cite{ELMandELO}, we establish the existence of a unique solution $(Y^n,Z^n,V^n,M^n) \in \mathcal{S}^2_{\mu}(A;\mathbb{R}) \times M^2_{\mu}(\mathbb{R}^{d}) \times M^2_{\mu}(\mathbb{L}^2_Q) \times \mathbb{M}^2_{\mu}$ of the GBSDE \eqref{penalized version for the RBSDE with one lower barrier} associated with data $(\xi,\mathfrak{f}_n,g,A)$.
			
			Denote $K^n_t := n \int_0^t (Y^n_s -L_s)^{-} ds$ for $t \in [0,T]$.
			\subparagraph*{Step 2: Uniform estimation for the sequence $\left\lbrace Y^n,Z^n,V^n,K^n,M^n\right\rbrace_{n \in \mathbb{N}}$}
			The main objective of this Step is to prove the following Lemma:
			\begin{lemma}
				There exists a constant $C>0$ independent of $n$ such that for any $\mu > 0$
				\begin{equation}\label{uniform estimation for the penalized GBSDE with one lower barrier}
					\begin{split}
						\sup_{n \geq 0} \mathbb{E} &\left[\sup_{0 \leq t \leq  T} e^{\mu A_t}\left| Y^n_t \right|^2
						+  \int_0^T e^{\mu A_s}\left\lbrace \left| Y^n_s \right|^2 dA_s+ \left(\| Z^n_s\|^2+\left\| V^n_s \right\|^2_Q\right)  ds\right\rbrace   + \int_0^T e^{\mu A_s} d[M^n ]_s +(K^n_T)^2\right]\leq \mathfrak{c}.
					\end{split}
				\end{equation}
				\label{Lemma of UI}
			\end{lemma}
			\begin{proof}
				For any $\gamma, \mu > 0$, set $\Phi^{\gamma,\mu}_t = e^{\gamma t + \mu A_t}$ for $t \in [0,T]$.
				
				By It\^o's formula (see \cite{Protter}, Theorem II.32), applied to the semimartingale $\Phi^{\gamma,\mu}_t \left|Y^n_t\right|^2$, we can write
				\begin{equation}
					\begin{split}
						& \Phi^{\gamma,\mu}_t \left| Y^n_t \right|^2+ \int_t^T \Phi^{\gamma,\mu}_s \left| Y^n_s \right|^2  \left(\gamma ds+\mu dA_s\right)+\int_t^T e^{ \mu  A_s}\left( \left\lbrace  \| Z^n_s\|^2+ \left\| V^n_s \right\|^2_Q\right\rbrace   ds+d[ M^n ]_s \right) \\
						&= \Phi^{\gamma,\mu}_T \left| \xi \right|^2
						+2 \int_t^T \Phi^{\gamma,\mu}_s Y^n_s  \mathfrak{f}(s,Y^n_s) ds+ 2 \int_t^T \Phi^{\gamma,\mu}_s Y^n_s  dK^{n}_s +2 \int_t^T \Phi^{\gamma,\mu}_s Y_s^n g(s,Y^n_s) dA_s-2 \int_t^T \Phi^{\gamma,\mu}_s Y_s^n Z^n_s  dB_s\\
						&\qquad  -\int_t^T\Phi^{\gamma,\mu}_s
						\int_{E} \left( \left| Y^n_{s-} + V^n_s(e) \right|^2-\left| Y^n_{s-} \right|^2 \right)\tilde{N}(ds,de)-2\int_t^T \Phi^{\gamma,\mu}_s Y_{s-}^n  dM^n_s.
					\end{split}
					\label{Itos formula for GRBSDEPEN1}
				\end{equation}
				Under assumption \textbf{(H2)} and inequalities $a(a-b)^{-} \leq b^{+}(a-b)^{-}$ and $2ab \leq \epsilon a^2 + \dfrac{1}{\epsilon} b ^2$ for every $\epsilon >0$, we have
				$$
				2Y^n_s \mathfrak{f}(s,Y^n_s) \leq (2 \left| \alpha \right| +1) \left| Y^n_s \right|^2 + \left| \varphi_s \right|^2, \quad 2Y^n_s g(s,Y^n_s) \leq  \epsilon_1  \left| Y^n_s \right|^2+ \frac{1}{\epsilon_1}  \psi^2_s,
				$$
				and
				$$
				2 \mathfrak{c} e^{\mu A_s}Y^n_s(Y^n_s-L_s)^{-} \leq \epsilon_2 \mathfrak{c}^2 e^{2 \mu A_s} (L_s^{+})^2 + \frac{1}{\epsilon_2} \left| (Y^n_s-L_s)^{-} \right|^2,\quad \forall \mathfrak{c}>0.
				$$
				
				Plugging these inequalities into (\ref{Itos formula for GRBSDEPEN1}), choosing $\gamma = 2|\alpha| + 2$, $\epsilon_1 = \dfrac{\mu}{2}$, and taking the expectation on both sides yields the existence of a constant $\mathfrak{c}_{\mu} > 0$ depending on $\mu$ such that
				\begin{equation}
					\begin{split}
						&  \mathbb{E}\left[\Phi^{\gamma,\mu}_t \left| Y^n_t \right|^2 \right]+\mathbb{E}\left[ \int_t^T \Phi^{\gamma,\mu}_s\left| Y^n_s \right|^2 \left(ds+dA_s\right) \right]
						+\mathbb{E}\left[ \int_t^T \Phi^{\gamma,\mu}_s \left(  \left\lbrace \| Z^n_s\|^2+\left\| V^n_s \right\|^2_Q \right\rbrace  ds+d[M^n ]_s\right) \right] \\
						&  \leq  \mathfrak{c}_{\mu}\left(  \mathbb{E}\left[ \Phi^{\gamma,\mu}_T \left| \xi \right|^2 \right]
						+ \mathbb{E}\left[ \int_t^T \Phi^{\gamma,\mu}_s \left|\varphi_s \right|^2 ds \right]+\frac{1}{\epsilon_1} \mathbb{E}\left[ \int_t^T e^{\mu A_s}  \psi^2_s dA_s \right] \right)  \\
						&\qquad
						+ \epsilon_2 \mathfrak{c}^2_{\mu} \mathbb{E}\left[ \sup_{0 \leq t \leq T} \Phi^{2\gamma,2\mu}_t (L_t^{+})^2  \right]+\frac{1}{\epsilon_2} \mathbb{E}\left[ (K^n_T-K^n_t)^2 \right].
					\end{split}
					\label{Place in what we substitute for the classical proof}
				\end{equation}
				From equations \eqref{penalized version for the RBSDE with one lower barrier}, by squaring and considering the linear growth of $\mathfrak{f}$ and $g$, and Remark \ref{Rmq 1}, we conclude that
				\begin{equation}
					\begin{split}
						\mathbb{E}\left[ (K^n_T-K^n_t)^2 \right] 
						\quad &\leq 7 \mathbb{E} \left[  \left| \xi\right|^2 +   \left| Y^n_t \right|^2
						+ \frac{2}{\gamma} \int_0^T  e^{\gamma s}\left| \varphi_s \right|^2 ds +\frac{2\kappa^2}{\gamma} \int_t^T e^{\gamma s} \left| Y^n_s \right|^2  ds
						+\frac{2}{\mu}\int_0^T e^{\mu A_s} \left| \psi_s \right|^2 dA_s \right. \\
						& \qquad\qquad \left.   +\frac{2\kappa^2}{\mu}\int_t^T e^{\mu A_s} \left| Y^n_s \right|^2  dA_s    +\int_t^T \left(  \left\lbrace  \| Z^n_s\|^2+\left\| V^n_s \right\|^2_Q \right\rbrace  ds
						+ d[M^n ]_s\right) \right].
					\end{split}
					\label{substituting in the classical proof}
				\end{equation}
				Substituting equation (\ref{substituting in the classical proof}) to equation (\ref{Place in what we substitute for the classical proof}) and choosing $\epsilon_2>7\max\left(\dfrac{1}{\gamma};\dfrac{2\kappa^2}{\gamma};\dfrac{2}{\mu};\dfrac{2\kappa^2}{\mu};1 \right)$, we derive the existence of a constant $\mathfrak{c}_{\alpha,\mu}>0$ such that
				\begin{equation}
					\begin{split}
						& \mathbb{E}\left[ \int_0^T  \Phi^{\gamma,\mu}_s  \left| Y^n_s \right|^2 \left( ds+dA_s\right)+  \int_0^T e^{\mu A_s}   \| Z^n_s\|^2  ds  + \int_0^T  \Phi^{\gamma,\mu}_s  \int_{E} \left| V^n_s(e) \right|^2 Q(s, de )\eta(s) ds+ \int_0^T  \Phi^{\gamma,\mu}_s  d[M^n ]_s\right]\\
						&\leq   \mathfrak{c}_{\alpha,\mu}\left(\mathbb{E}\left[   \Phi^{\gamma,\mu}_T\left| \xi \right|^2 \right]+ \mathbb{E}\left[\int_0^T  \Phi^{\gamma,\mu}_s \left| \varphi_s \right|^2 ds   \right] +\mathbb{E}\left[ \int_0^T  \Phi^{\gamma,\mu}_s \psi^2_s dA_s \right]  +  \mathbb{E}\left[ \sup_{0 \leq t \leq T}  \Phi^{2\gamma,2\mu}_s (L_t^{+})^2  \right]
						\right).
					\end{split}
					\label{est}
				\end{equation}
				
				Returning to \eqref{Itos formula for GRBSDEPEN1}, and applying the Burkholder-Davis-Gundy inequality (similar to those on Page 210 of \cite{ELMandELO}), along with the estimates \eqref{substituting in the classical proof} and \eqref{est}, we deduce that
				\begin{equation*}
					\begin{split}
						&\sup_{n \in \mathbb{N}} \mathbb{E}\left[   \sup_{0 \leq t \leq T}   \Phi^{\gamma,\mu}_t \left| Y^n_t \right|^2+  \int_0^T  \Phi^{\gamma,\mu}_s \left| Y^n_s \right|^2 dA_s
						+  \int_0^T  \Phi^{\gamma,\mu}_s \left(  \left\lbrace  \| Z^n_s\|^2+\left\| V^n_s \right\|^2_Q \right\rbrace  ds+ d[M^n ]_s\right)
						+ \left( K^n_T \right)^2 \right]\\
						&\leq   \mathfrak{c} \left(\mathbb{E}\left[   \Phi^{\gamma,\mu}_T\left| \xi \right|^2 \right]+ \mathbb{E}\left[\int_0^T \Phi^{\gamma,\mu}_s \left| \varphi_s\right|^2 ds   \right] +\mathbb{E}\left[ \int_0^T  \Phi^{\gamma,\mu}_s  \psi^2_s dA_s \right]  +  \mathbb{E}\left[ \sup_{0 \leq t \leq T}  \Phi^{2\gamma,2\mu}_t(L_t^{+})^2  \right]  \right).
					\end{split}
				\end{equation*}
				Which completes the proof.
			\end{proof}

			Now, we establish the convergence of the sequence $\left\lbrace Y^n,Z^n,V^n,K^n,M^n\right\rbrace_{n \in \mathbb{N}}$.
			\subparagraph*{Step 3: There exists an $\mathbb{F}$-optional processes $Y:=(Y_t)_{0 \leq t \leq T}$ such that $Y^n \nearrow Y$ in $\mathbb{L}^2_{\mu}(\Omega \times [0,T], d \mathbb{P} \times dt; \mathbb{R}) \cap \mathbb{L}^2_{\mu}(\Omega \times [0,T], d \mathbb{P} \times dA_t; \mathbb{R})$ }
			Obviously, for each $n \in \mathbb{N}$, and all $(s,y)\in [0,T] \times \mathbb{R}$, $f_n(s,y) \leq f_{n+1}(s,y)$. Therefore, from the comparison Theorem \ref{Version comparison theorem -- independ of v}, we have $Y^{n}_t \leq Y^{n+1}_t$, $0 \leq t  \leq T$, $\mathbb{P}$-a.s. Therefore, there exists a right lower semi-continuous process $Y:=(Y_t)_{0 \leq t \leq T}$ such that
			\begin{equation*}
				Y^n_t \nearrow Y_t, \quad 0 \leq t \leq T,\quad \text{a.s.}
				\label{increasing convergence}
			\end{equation*}
			
			From the uniform estimation \eqref{uniform estimation for the penalized GBSDE with one lower barrier}, the monotonic limit theorem and the dominated convergence theorem for Lebesgue integration, and Fatou's lemma, we have
			\begin{equation*}
				\begin{split}
					\mathbb{E}\left[ \sup_{0 \leq t \leq T}  e^{\mu A_s}\left| Y_t \right|^2 +\int_0^T e^{\mu A_s} \left| Y_s \right|^2 dA_s \right]
					&\leq \liminf_{n \rightarrow \infty} \mathbb{E}\left[  \sup_{0 \leq t \leq T} e^{\mu A_t} \left| Y_t^n \right|^2 +\int_0^T e^{\mu A_s} \left| Y^n_s \right|^2 dA_s \right] \leq \mathfrak{c},
				\end{split}
			\end{equation*}
			and
			\begin{equation}
				\lim\limits_{n \rightarrow \infty} \mathbb{E}\left[   \int_0^T e^{\mu A_s} \left| Y^n_s-Y_s  \right|^2 \left(ds+dA_s\right) \right]=0.
				\label{Converegence by the dominate convergence theorem for the GRBSDE}
			\end{equation}
			
			\paragraph*{Step 4: Let's show that} 
			$$\mathbb{E}\left[ \sup_{0 \leq t \leq T} \left| (Y^n_t-L_t)^{-} \right|^2 \right] \underset{n \rightarrow \infty}{\longrightarrow} 0.$$
			Let $(\bar{Y}^n_t,\bar{Z}^n_t,\bar{V}^n_t,\bar{M}^n_t)_{t \leq T}$ be the solution of the following unconstrained GBSDE:
			\begin{equation}
				\begin{split}
					\bar{Y}_t^n=\xi & +\int_t^T \mathfrak{f}(s,Y^n_s)  ds + \int_t^T g(s,Y^n_s) dA  _s +n\int_t^T (L_s - \bar{Y}^n_s)ds\\
					&-\int_t^T \bar{Z}^n_s  dB_s - \int_t^T \int_{E} \bar{V}^n_s(e) \tilde{N}( ds , de )-\int_t^T  d\bar{M}^n_s, \quad 0 \leq t \leq T.
				\end{split}
				\label{state process for the unconstrained BSDE that we will focus on it}
			\end{equation}
			By the comparison result given by Theorem \ref{Version comparison theorem -- independ of v}, we have  $Y^n_t \geq \bar{Y}^n_t$, for all $0 \leq t \leq T$, $n \geq 0$ a.s. Next, we will focus on the state process $(\bar{Y}^n_t)_{0 \leq t \leq T}$ of the unconstrained BSDE \eqref{state process for the unconstrained BSDE that we will focus on it}. Let $\tau$ be an $\mathbb{F}$-stopping time in $\mathcal{T}_0^T$. Then, applying It\^{o}'s formula and taking the conditional expectation with respect to $\mathcal{F}_{\tau}$, we have
			\begin{equation}
				\begin{split}
					\bar{Y}^{n}_{\tau}=& \mathbb{E}\left[ e^{-n(T-\tau)} \xi +\int_{\tau}^T e^{-n(s-\tau)} \mathfrak{f}(s,Y^{n}_s)ds \right.\\
					&\qquad \left.  +\int_{\tau}^T e^{-n(s-\tau)} g(s,Y^{n}_s)\mathrm{d}A_s +n \int_{\tau}^T e^{-n(s-\tau)} L_s ds \mid \mathcal{F}_{\tau} \right]
				\end{split}
				\label{Convergence theorem -- conditional expectation for new state process}
			\end{equation}

		\begin{lemma}\label{convergence lemma}
			The sequence of random variables
			\begin{equation*}
				X_n := e^{-n(T-\tau)} \xi + n \int_{\tau}^T e^{-n(s-\tau)} L_s \, ds 
				\xrightarrow[n \to +\infty]{}
				\xi \mathds{1}_{\{\tau=T \}} + L_{\tau} \mathds{1}_{\{ \tau < T \}}
			\end{equation*}
			converges almost surely and in $\mathbb{L}^2(\Omega,\mathcal{F},\mathbb{P})$. Moreover, for any sub-$\sigma$-algebra $\mathcal{G} \subset \mathcal{F}$, the conditional expectation $\mathbb{E}[X_n \mid \mathcal{G}]$ also converges in $\mathbb{L}^2(\Omega,\mathcal{F},\mathbb{P})$ to $\mathbb{E}[\xi \mathds{1}_{\{\tau=T\}} + L_{\tau} \mathds{1}_{\{\tau<T\}} \mid \mathcal{G}]$.
		\end{lemma}
		
	Lemma \ref{convergence lemma} has been widely used in the literature on RBSDE theory. We refer, for example, to \cite{ElKaroui} (page 723), \cite{HamadeneO} (page 8), and \cite{Marzougue} (page 9).
	
		\begin{proof}[Proof of Lemma \ref{convergence lemma}]
			We prove this result in several steps.
			
			\paragraph{1. Almost sure pointwise convergence.}
			Consider a fixed \(\omega \in \Omega\). Two cases arise:
			
			\emph{\underline{Case 1:}} $\tau(\omega) = T$. Then $e^{-n(T-\tau(\omega))} = e^0 = 1$ and the integral $\int_{\tau}^T \ast = \int_T^T \ast = 0$. Hence $X_n(\omega) = \xi(\omega)$, and the limit is $\xi(\omega)$, which corresponds exactly to $\xi(\omega)\mathds{1}_{\{\omega : \tau(\omega)=T\}}(\omega)$.
			
			\emph{\underline{Case 2:}} \(\tau(\omega) < T\). Then $e^{-n(T-\tau(\omega))} \to 0$ as $n \to +\infty$. For the second term, perform the change of variable \(u = s - \tau(\omega)\):
			\begin{equation*}
				n \int_{\tau(\omega)}^T e^{-n(s-\tau(\omega))} L_s(\omega) \, ds = n \int_0^{T-\tau(\omega)} e^{-nu} L_{\tau(\omega)+u}(\omega) \, du.
			\end{equation*}
			(This is a standard "Laplace average" of $u \mapsto L_{\tau(\omega)+u}$ for $\omega \in \Omega$).\\
			Set $\mathsf{K}_n(u) = n e^{-nu} \mathbf{1}_{[0,\infty)}(u)$. This is an approximate identity kernel: $\int_0^\infty \mathsf{K}_n(u)du = 1$ and its mass concentrates near $0$ as $n \to \infty$. Since $L$ has RCLL paths on $[0,T]$, the map \(u \mapsto L_{\tau(\omega)+u}(\omega)\) is right-continuous at $u=0$, so $L_{\tau(\omega)+u}(\omega) \to L_{\tau(\omega)+}(\omega)=L_{\tau(\omega)}(\omega)$ as $u \to 0^+$. \\
			Define the function
			\begin{equation*}
				\varrho(u) := L_{\tau(\omega)+u}(\omega), \quad u \in [0, T-\tau(\omega)].
			\end{equation*}
			Since the process $L$ has RCLL paths, the function $\varrho$ is right-continuous on $[0, T-\tau(\omega))$ and has a left limit at $T-\tau(\omega)$. In particular, $\varrho$ is right-continuous at $u=0$, which means
			\begin{equation*}
				\lim_{u \downarrow 0} \varrho(u) = \varrho(0) = L_{\tau(\omega)}(\omega).
			\end{equation*}
			Let us now extend $\varrho$ to $[0, \infty)$ by setting $\varrho(u) = \varrho(T-\tau(\omega))$ for all $u > T-\tau(\omega)$. This extension is bounded on \([0,\infty)\) because \(L\) has RCLL paths on \([0,T]\), hence is bounded on \([\tau(\omega), T]\). Let
			\begin{equation*}
				M_\omega := \sup_{t \in [0,T]} |L_t(\omega)| < \infty.
			\end{equation*}
			Now consider the integral of interest:
			\begin{equation*}
				I_n := n \int_0^{T-\tau(\omega)} e^{-nu} \varrho(u) \, du.
			\end{equation*}
			We can write it as
			\begin{equation*}
				I_n = n \int_0^{\infty} e^{-nu} \varrho(u) \, du - n \int_{T-\tau(\omega)}^{\infty} e^{-nu} \varrho(u) \, du =: J_n - R_n.
			\end{equation*}
			
			For $u \geq T-\tau(\omega) > 0$, we have $|\varrho(u)| \leq M_\omega$. Therefore
			\begin{equation*}
				|R_n| \leq n \int_{T-\tau(\omega)}^{\infty} e^{-nu} M_\omega \, du = M_\omega \left[ -e^{-nu} \right]_{u=T-\tau(\omega)}^{\infty} = M_\omega e^{-n(T-\tau(\omega))}.
			\end{equation*}
			Since $T-\tau(\omega) > 0$, we have $e^{-n(T-\tau(\omega))} \to 0$ as $n \to +\infty$. Thus,
			\begin{equation*}
				\lim_{n \to +\infty} R_n = 0.
			\end{equation*}
			Consequently, $\lim_{n \to +\infty} I_n = \lim_{n \to +\infty} J_n$, provided the latter limit exists.
			
			By definition, we have $J_n = n \int_0^{\infty} e^{-nu} \varrho(u) \, du$. Let $\varepsilon > 0$. By the right-continuity of $\varrho$ at $0$, there exists $\delta > 0$ such that for all $u \in [0, \delta]$,
			\begin{equation*}
				|\varrho(u) - \varrho(0)| < \frac{\varepsilon}{2}.
			\end{equation*}
			We then decompose \(J_n\) into two parts:
			\begin{equation*}
				J_n = n \int_0^{\delta} e^{-nu} \varrho(u) \, du + n \int_{\delta}^{\infty} e^{-nu} \varrho(u) \, du =: A_n + B_n.
			\end{equation*}
			For $A_n$, we can write
			\begin{equation*}
				A_n = \varrho(0) \cdot n \int_0^{\delta} e^{-nu} \, du + n \int_0^{\delta} e^{-nu} (\varrho(u)-\varrho(0)) \, du.
			\end{equation*}
			Computing the first integral,
			\begin{equation*}
				n \int_0^{\delta} e^{-nu} \, du = \left[ -e^{-nu} \right]_{u=0}^{\delta} = 1 - e^{-n\delta} \xrightarrow[n \to +\infty]{} 1.
			\end{equation*}
			For the second integral, use the right-continuity property. For all $u \in [0,\delta]$, we have $|\varrho(u)-\varrho(0)| < \varepsilon/2$. Hence
			\begin{equation*}
				\left| n \int_0^{\delta} e^{-nu} (\varrho(u)-\varrho(0)) \, du \right|
				\leq n \int_0^{\delta} e^{-nu} \cdot \frac{\varepsilon}{2} \, du
				= \frac{\varepsilon}{2} (1 - e^{-n\delta}) \leq \frac{\varepsilon}{2}.
			\end{equation*}
			Thus, for sufficiently large \(n\), we have
			\begin{equation*}
				|A_n - \varrho(0)| \leq |\varrho(0)| \cdot |1 - (1 - e^{-n\delta})| + \frac{\varepsilon}{2}
				= |\varrho(0)| e^{-n\delta} + \frac{\varepsilon}{2}.
			\end{equation*}
			Since $e^{-n\delta} \to 0$, there exists $N_1 \in \mathbb{N}^\ast$ such that for all $n \geq N_1$, $|\varrho(0)| e^{-n\delta} < \varepsilon/2$. Therefore, for $n \geq N_1$,
			\begin{equation*}
				|A_n - \varrho(0)| < \varepsilon.
			\end{equation*}
			This proves that $\lim_{n \to +\infty} A_n = \varrho(0) = L_{\tau(\omega)}(\omega)$.
			
			Now let us control the term $B_n$. For this term, we have
			\begin{equation*}
				|B_n| \leq n \int_{\delta}^{\infty} e^{-nu} |\varrho(u)| \, du
				\leq M_\omega \cdot n \int_{\delta}^{\infty} e^{-nu} \, du
				= M_\omega e^{-n\delta}.
			\end{equation*}
			Since $\delta > 0$, $e^{-n\delta} \to 0$ as $n \to +\infty$. Hence $\lim_{n \to +\infty} B_n = 0$.
			
			Finally, we have $J_n = A_n + B_n$ with $A_n \to \varrho(0)$ and $B_n \to 0$ as $n \to +\infty$. Therefore $J_n \to \varrho(0)$. Since $I_n = J_n - R_n$ and $R_n \to 0$, we obtain
			\begin{equation*}
				\lim_{n \to \infty} I_n = \varrho(0) = L_{\tau(\omega)}(\omega).
			\end{equation*}
			
			Returning to $X_n(\omega)$, recall that
			\begin{equation*}
				X_n(\omega) = e^{-n(T-\tau(\omega))} \xi(\omega)
				+ n \int_{\tau(\omega)}^T e^{-n(s-\tau(\omega))} L_s(\omega) \, ds.
			\end{equation*}
			
			The first term tends to $0$ because $T-\tau(\omega) > 0$. The second term is exactly $I_n$ (after the change of variable $u = s-\tau(\omega)$). We have just shown that $I_n \to L_{\tau(\omega)}(\omega)$. Hence
			\begin{equation*}
				X_n(\omega) \to 0 + L_{\tau(\omega)}(\omega) = L_{\tau(\omega)}(\omega).
			\end{equation*}
			Since we are in the case $\tau(\omega) < T$, we have
			$L_{\tau(\omega)}(\omega) = L_{\tau(\omega)}(\omega) \mathds{1}_{\{\tau < T\}}(\omega)$.
			In the case $\tau(\omega) = T$, we directly have $X_n(\omega) = \xi(\omega)$, which is the announced limit.
			
			Thus, for every $\omega \in \Omega$, we have established the pointwise convergence
			\begin{equation*}
				X_n(\omega) \longrightarrow
				\xi(\omega) \mathds{1}_{\{\tau(\omega)=T\}} +
				L_{\tau(\omega)}(\omega) \mathds{1}_{\{\tau(\omega)<T\}}.
			\end{equation*}
			
			\paragraph{2. Convergence in $\mathbb{L}^2$.}
			First, assume that the process $L$ is uniformly bounded: there exists a constant $C > 0$ such that $\sup_{t \in [0,T]} |L_t| \leq C$ a.s., and $|\xi| \leq C$ a.s. Then
			\begin{equation*}
				|X_n| \leq C + n \int_{\tau}^T e^{-n(s-\tau)} \cdot C \, ds
				\leq C + C(1 - e^{-n(T-\tau)}) \leq 2C.
			\end{equation*}
			The sequence \((X_n)\) is thus uniformly bounded. The almost sure convergence established in \textbf{Step 1}, together with the dominated convergence theorem (in $\mathbb{L}^2$), yields \(X_n \to X\) in $\mathbb{L}^2$, where $X = \xi \mathds{1}_{\{\tau=T\}} + L_{\tau} \mathds{1}_{\{\tau<T\}}$.
			
			In the general case where \(L\) is not necessarily bounded but \(\sup_{t \in [0,T]} |L_t| \in \mathbb{L}^2\) (from assumption \textbf{(H3)}), we use a truncation argument. For any \(M > 0\), define $L_t^M = (L_t \wedge M) \vee (-M)$ for $t \in [0,T]$ and $\xi^M = (\xi \wedge M) \vee (-M)$. Let $X_n^M$ be the corresponding quantity and $X^M$ its limit. Then
			\begin{align*}
				\|X_n - X\|_{\mathbb{L}^2}
				&\leq \|X_n - X_n^M\|_{\mathbb{L}^2}
				+ \|X_n^M - X^M\|_{\mathbb{L}^2}
				+ \|X^M - X\|_{L^2}.
			\end{align*}
			The first term is bounded by
			$2\|\sup_t |L_t| \mathbf{1}_{\{\sup_t |L_t| > M\}}\|_{L^2}
			+ 2\|\xi \mathbf{1}_{\{|\xi| > M\}}\|_{L^2}$,
			which tends to $0$ as $M \to +\infty$ by dominated convergence. The second term tends to $0$ for each fixed $M$ by the bounded case. The third term is bounded similarly to the first. A subsequence extraction allows us to conclude that $X_n \to X$ in $\mathbb{L}^2$.
			
			\paragraph{3. Convergence of conditional expectations.}
			Let $\mathcal{G} \subset \mathcal{F}$ be a sub-$\sigma$-algebra. From \textbf{Step 2}, $X_n \to X$ in $\mathbb{L}^2$, so $\mathbb{E}[|X_n - X|^2] \to 0$. By the conditional Jensen inequality,
			\begin{equation*}
				\mathbb{E}\bigl[|\mathbb{E}[X_n \mid \mathcal{G}] - \mathbb{E}[X \mid \mathcal{G}]|^2\bigr]
				\leq \mathbb{E}\bigl[\mathbb{E}[|X_n - X|^2 \mid \mathcal{G}]\bigr]
				= \mathbb{E}[|X_n - X|^2] \to 0.
			\end{equation*}
			Thus, $\mathbb{E}[X_n \mid \mathcal{G}] \to \mathbb{E}[X \mid \mathcal{G}]$ in $\mathbb{L}^2$. Note that this convergence also holds almost surely along a subsequence by the conditional dominated convergence theorem, but the $\mathbb{L}^2$ convergence is established directly without extraction.
			
			This completes the proof of Lemma \ref{convergence lemma}.
		\end{proof}
		
		By Lemma \ref{convergence lemma}, we have the following convergence:
			\begin{equation*}
				e^{-n(T-\tau)} \xi + n \int_{\tau}^T e^{-n(s-\tau)} L_s \, ds \xrightarrow[n \to +\infty]{}
				\xi \mathds{1}_{\{\tau=T \}}+ L_{\tau} \mathds{1}_{\{ \tau < T \}}
			\end{equation*}
			almost surely and in $\mathbb{L}^2(\Omega,\mathcal{F},\mathbb{P})$, and the conditional expectation also converges in $\mathbb{L}^2(\Omega,\mathcal{F},\mathbb{P})$. Consequently,
			\begin{equation*}
				e^{-n(T-\tau)} L_{\tau} + n \int_{\tau}^T e^{-n(s-\tau)} L_s ds-L_{\tau} \xrightarrow[n \to +\infty]{} 0
			\end{equation*}
			almost surely and conditionally with respect to $\mathcal{F}_{\tau}$ in  $L^2(\Omega,\mathcal{F},d \mathbb{P})$.\\
			Henceforth, define the sequence of processes
			$\{X^n\}_{n \in \mathbb{N}} := \left\{ \left( e^{-n(T-t)}L_T + n \int_t^T e^{-n(s-t)}L_s \, ds - L_t \right)_{t \in [0,T]} \right\}_{n \in \mathbb{N}}$.
			Then $\{X^n\}_{n \in \mathbb{N}}$ converges uniformly in $t$ to zero. The same holds for the sequence $\left\{(X^n)^{-}\right\}_{n \in \mathbb{N}}$. The convergence of $\{X^n\}_{n \in \mathbb{N}}$ and $\left\{(X^n)^{-}\right\}_{n \in \mathbb{N}}$ has been explicitly proved in \cite{StochDyn} (see pages 2450001-21--22). However, for completeness we recall the arguments used.
			
			For $\mathbb{P}$-almost all $\omega$, the function $t \mapsto L_t(\omega)$ is right-continuous. Then, for all $t \in [0, T]$ and for all $\epsilon > 0$, there exists some $\delta(\omega) > 0$ such that for any $s \in ]t, t+\delta(\omega)[$, we have $\left|L_s(\omega) - L_t(\omega)\right| < \epsilon$. Now, for all $t \in [0, T]$, if $T-t < \delta(\omega)$, then
			$$
			\left|X_t^n\right| \leq \left|L_T - L_t\right| \mathds{1}_{\{t<T\}} + \left|L_T - L_t\right| < 2\epsilon
			$$
			for $n > \eta(\delta(\omega))$, when $n$ is sufficiently large.\\
			On the other hand, if $T-t > \delta(\omega)$, we may choose an instant $t_0 \in [t, T]$ such that $t_0 = t + \frac{\delta(\omega)}{2}$. It then follows that
			$$
			\left|X_t^n\right| \leq \left|e^{-n(T-t)} L_T + n \int_{t_0}^T e^{-n(s-t)} L_s \, ds\right|
			+ \left|n \int_t^{t_0} e^{-n(s-t)} L_s \, ds - L_t\right|.
			$$
			From the fact that $S(\omega) := \sup_{0 \leq s \leq T} \left|L_s(\omega)\right|^2 < \infty$ (from assumption \textbf{(H3)}), we deduce the existence of an integer $\eta(\delta(\omega), S(\omega))$ such that
			$$
			\left|X_t^n\right| < 2\epsilon, \quad \forall n > \eta(\delta(\omega), S(\omega)),
			$$
			when $n$ is sufficiently large. Therefore, the sequence $\left(X^n\right)_{n \in \mathbb{N}}$ converges uniformly in $t$ to $0$, $\mathbb{P}$-a.s., and the same also holds for $\left(\left(X^n\right)^{-}\right)_{n \in \mathbb{N}}$.

			Also, we have
			\begin{equation*}
				\mathbb{E}\left[ \left| \int_{\tau}^T e^{-n(s-\tau)} \mathfrak{f}(s,Y^n_s) ds  \right|^2 \right]
				\leq \dfrac{1}{n} \left( \mathbb{E}\left[ \int_0^T e^{\mu A_s} \varphi_s^2 ds \right]
				+ T \kappa^2 \mathbb{E}\left[ \sup_{0 \leq t \leq T} e^{\mu A_s} \left| Y^n_s \right|^2     \right] \right)
			\end{equation*}
			and
			\begin{equation*}
				\begin{split}
					\mathbb{E}\left[ \left| \int_{\tau}^T e^{-n(s-\tau)} g(s,Y^n_s) dA_s  \right|^2 \right] 
					& \quad \leq \mathbb{E}\left[  \left(\int_{\tau}^T   e^{-\mu(A_s-A_{\tau})} dA_s \right)
					\left( \int_{\tau}^T e^{-2n(s-\tau)}e^{\mu(A_s-A_{\tau})}\left| g(s,Y^n_s) \right|^2 dA_s \right) \right] \\
					& \quad  \leq \dfrac{2}{\mu} \left( \mathbb{E}\left[ \int_{0}^T e^{-2n(s-\tau)} e^{\mu A_s} \psi^2_s dA_s
					+\kappa^2 \int_{0}^T e^{-2n(s-\tau)}  e^{\mu A_s}  \left| Y^n_s \right|^2 dA_s   \right] \right)
				\end{split}
			\end{equation*}
			Hence, this implies that
			\begin{equation*}
				\mathbb{E}\left[ \int_{\tau}^T e^{-n(s-\tau)} \mathfrak{f}(s,Y^n_s) ds
				+\int_{\tau}^T e^{-n(s-\tau)} g(s,Y^n_s) dA_s \mid \mathcal{F}_{\tau}  \right] \rightarrow 0
			\end{equation*}
			in $\mathbb{L}^2(\Omega,\mathcal{F},d \mathbb{P})$ as $n \rightarrow +\infty$. Now, we denote
			\begin{equation*}
				\bar{y}^n_t := e^{-n(T-t)}\xi + n \int_t^T e^{-n(s-t)} L_s ds+\int_{t}^T e^{-n(s-t)} \mathfrak{f}(s,Y^n_s) ds+ \int_{t}^T e^{-n(s-t)} g(s,Y^n_s) dA_s.
			\end{equation*}
			Obviously, from \eqref{Convergence theorem -- conditional expectation for new state process}, we have
			$\bar{Y}^n_t-L_t = \mathbb{E}\left[  \bar{y}^n_t-L_t  \mid \mathcal{F}_t \right]$. Then, using Jensen's inequality and Doob's maximal quadratic inequality, we obtain
			\begin{equation}
				\begin{split}
					\mathbb{E}\left[ \sup_{0 \leq t \leq T} \left| (\bar{Y}^n_t-L_t)^{-} \right|^2  \right] &
					\leq \mathbb{E}\left[ \sup_{0 \leq t \leq T} \left| \mathbb{E}\left[
					\left( \bar{y}^n_t-L_t \right)^{-} \mid \mathcal{F}_t \right] \right|^2  \right] \leq 4 \mathbb{E} \left[ \sup_{0 \leq t \leq T}    \left| \left( \bar{y}^n_t-L_t \right)^{-}  \right|^2  \right].
				\end{split}
				\label{Resultst for one RBSDE with one reflecting barrier}
			\end{equation}
			Using \eqref{state process for the unconstrained BSDE that we will focus on it}, we found that
			\begin{equation}
				\begin{split}
					\left| \left( \bar{y}^n_t-L_t \right)^{-} \right|^2 &\leq \left| \left( X^n_t+\int_{t}^T e^{-n(s-t)} \mathfrak{f}(s,Y^n_s) ds
					+ \int_{t}^T e^{-n(s-t)} g(s,Y^n_s) dA_s \right)^{-} \right|^2\\
					&\leq 2  \left| \left( X^n_t\right)^{-} \right|^2 +2  \left| \int_{t}^T e^{-n(s-t)} \mathfrak{f}(s,Y^n_s) ds
					+ \int_{t}^T e^{-n(s-t)} g(s,Y^n_s) dA_s \right|^2.
				\end{split}
				\label{finally lower barrier}
			\end{equation}
			The dominates convergence theorem  implies that
			\begin{equation*}
				\mathbb{E}\left[ \sup_{0 \leq t \leq T}  \left| \left( X^n_t\right)^{-} \right|^2 \right]  \underset{n \rightarrow \infty}{\longrightarrow} 0
			\end{equation*}
			and
			\begin{equation*}
				\mathbb{E}\left[ \sup_{0 \leq t \leq T}  \left| \int_{t}^T e^{-n(s-t)} \mathfrak{f}(s,Y^n_s) ds
				+ \int_{t}^T e^{-n(s-t)} g(s,Y^n_s) dA_s \right|^2 \right]  \underset{n \rightarrow \infty}{\longrightarrow} 0.
			\end{equation*}
			Hence, from inequality \eqref{finally lower barrier} and \eqref{Resultst for one RBSDE with one reflecting barrier}, we deduce that
			\begin{equation*}
				\mathbb{E}\left[ \sup_{0 \leq t \leq T} \left| (\bar{Y}^n_t-L_t)^{-} \right|^2 \right] \underset{n \rightarrow \infty}{\longrightarrow} 0.
			\end{equation*}
			Using the fact that $(Y^n_t-L_t)^{-} \leq (\bar{Y}^n_t-L_t)^{-} $; for any $t \leq T$, $\mathbb{P}$-a.s., we conclude that
			\begin{equation*}
				\mathbb{E}\left[ \sup_{0 \leq t \leq T}  \left| (Y^n_t-L_t)^{-} \right|^2 \right] \underset{n \rightarrow \infty}{\longrightarrow} 0.
			\end{equation*}
			\subparagraph*{Step 5: Convergence of the  sequence $\left\lbrace Y^n,Z^n,V^n,K^n,M^n\right\rbrace_{n \in \mathbb{N}}$}
			The objective of this section is to establish the following convergence result:
			\begin{lemma}
				\begin{itemize}
					\item The limited process $Y:=(Y_t)_{0 \leq t \leq T}$ defined in \eqref{increasing convergence} satisfies 
					\begin{equation*}
						\begin{split}
							\lim\limits_{n \rightarrow \infty} \mathbb{E} \left[ \sup_{0 \leq t \leq T} \left| Y^n_t-Y_t \right|^2  \right]=0,
						\end{split}
					\end{equation*}
					in particular $Y \in \mathcal{D}^2$.
					\item There existence a quadruplet of  $\mathbb{F}$-adapted processes $(Z,V,K,M):=(Z_t,V_t,K_t,M_t)_{0 \leq t \leq T}$ such that 
					\begin{equation*}
						\begin{split}
							& \lim\limits_{n \rightarrow \infty} \mathbb{E} \left[ \int_0^T  \|Z^n_s-Z_s \|^2 ds
							+ \int_0^T   \left\| V^n_s(e)-V(e)_s \right\|^2_Qds  +\int_0^T d[M^n-M]_s+\sup_{0 \leq t \leq T} \left| K^n_t-K_t \right|^2   \right]=0,
						\end{split}
					\end{equation*}
					and $(Y,Z,V,K,M)$ satisfies the following GBSDE:
					\begin{equation*}
						\begin{split}
							&Y_t=\xi+\int_t^T \mathfrak{f}(s,Y_s)ds+\int_t^T g(s,Y_s)dA_s+(K_T+K_t) -\int_t^T Z_s dB_s -\int_t^T \int_E V_s(e) \tilde{N}(ds,de)-\int_t^T dM_s.
						\end{split}
					\end{equation*}
				\end{itemize}
				\label{Lemma 0316}
			\end{lemma}
			\begin{proof}
				For any $n > p \geq 0$, applying It\^{o}'s formula, assumption \textbf{(H2)}, and taking expectations, we have
				\begin{equation*}
					\begin{split}
						& 
						(\gamma-2|\alpha|) \mathbb{E}\int_t^T e^{\gamma s} \left| Y^n_s-Y^p_s  \right|^2 ds+\mathbb{E}\int_t^T e^{\gamma s}\left( \left\lbrace  \| Z^n_s-Z^p_s  \|^2+\left\| V^n_s-V^p_s  \right\|^2_Q\right\rbrace  ds+d[M^n-M^p]_s\right) \\
						&\leq  2e^{\gamma T}\mathbb{E}\int_0^T  \left( Y^n_s-L_s  \right)^{-} dK^p_s
						+2e^{\gamma T}\mathbb{E}\int_0^T \left( Y^p_s-L_s  \right)^{-} dK^n_s.
					\end{split}
				\end{equation*}
				Using the results of \textbf{Step 2} and \textbf{Step 4} and the Cauchy-Schwarz inequality, we deduce that
				\begin{equation}
					\begin{split}
						\mathbb{E}\int_t^T (Y^p_s-L_s)^{-} dK^n_s
						\leq \left(\mathbb{E}\left[  \sup_{0 \leq t \leq T} \left((Y^p_s-L_s)^{-} \right)^2 \right]\right)^\frac{1}{2}
						\left(\mathbb{E}\left|K^n_T \right|^2 \right)^\frac{1}{2} \xrightarrow[n,p \rightarrow +\infty]{} 0.
					\end{split}
					\label{first convergence results for Z,V,M,K in GRBSDE1}
				\end{equation}
				By the same arguments
				\begin{equation}
					\begin{split}
						\mathbb{E}\int_t^T (Y^n_s-L_s)^{-} dK^p_s  \xrightarrow[n,p \rightarrow +\infty]{} 0,\quad 0 \leq t \leq T.
					\end{split}
					\label{second convergence results for Z,V,M,K in GRBSDE1}
				\end{equation}
				Putting \eqref{first convergence results for Z,V,M,K in GRBSDE1} and \eqref{second convergence results for Z,V,M,K in GRBSDE1} together with the result of \textbf{Step 2} and choosing $\gamma\geq 2|\alpha|$, we conclude that
				\begin{equation*}
					\| Y^n-Y^p \|_{M^{2}(\mathbb{R})}^2 + \| Z^n-Z^p \|_{M^2(\mathbb{R}^d)}^2+\| V^n-V^p \|_{M^2(\mathbb{L}^2_Q)}^2+\| M^n-M^p \|_{\mathbb{M}^2}^2 \xrightarrow[n,p \rightarrow \infty]{} 0.
				\end{equation*}
				Henceforth, $\left\lbrace Y^n,Z^n,V^n,M^n\right\rbrace_{n \in \mathbb{N}}$ is a Cauchy sequence in $M^{2}(\mathbb{R}) \times M^2(\mathbb{R}^d)\times M^2(\mathbb{L}^2_Q) \times \mathbb{M}^2$. Consequently, there exists a triplet of processes $(Z,V,M) \in M^2(\mathbb{R}^d)\times M^2(\mathbb{L}^2_Q) \times \mathbb{M}^2$ such that:
				\begin{equation*}
					\| Z^n-Z \|_{M^2(\mathbb{R}^d)}^2+\| V^n-V \|_{M^2(\mathbb{L}^2_Q)}^2+\| M^n-M \|_{\mathbb{M}^2}^2 \xrightarrow[n \rightarrow +\infty]{} 0.
				\end{equation*}
				Then by the B-D-G inequality, it follows that
				\begin{equation}
					\begin{split}
						&2\mathbb{E}\left[ \sup_{0 \leq t \leq T} \left| \int_t^T (Y^n_s-Y^p_s)( Z^n_s-Z^p_s) dB_s  \right|  \right]\leq \frac{1}{6} \left\| Y^n-Y^p \right\|^2_{\mathcal{D}^2}
						+6c^2 \mathbb{E}\left[ \int_0^T \| Z^n_s-Z^p_s  \|^2 ds \right].
					\end{split}
					\label{BDG GBSDE(f,L)Z}
				\end{equation}
				By the same arguments we have
				\begin{equation}
					\begin{split}
						&
						2\mathbb{E}\left[ \sup_{0 \leq t \leq T} \left| \int_t^T \int_{E}   (Y^n_{s-}-	Y^p_{s-})( V^n_s(e)-V_s^p(e)) \tilde{N}(ds,de)  \right|  \right] \leq \frac{1}{6}  \left\| Y^n-Y^p \right\|^2_{\mathcal{D}^2}
						+6c^2 \mathbb{E}\left[ \int_0^T   \left\| V^n_s-V^p_s\right\|^2_Q ds \right],
					\end{split}
					\label{BDG GBSDE(f,L)V}
				\end{equation}
				and
				\begin{equation}
					\begin{split}
						&\mathbb{E}\left[ \sup_{0 \leq t \leq T} \left| \int_t^T  (Y^n_{s-}-Y^p_{s-})  d(M^n-M^p)_s  \right|  \right]
						\leq \frac{1}{6} \left\| Y^n-Y^p \right\|^2_{\mathcal{D}^2}
						+6c^2 \mathbb{E}\left[\int_0^T  d[ M^n-M^p ]_s \right].
					\end{split}
					\label{BDG GBSDE(f,L)M}
				\end{equation}
				Next, plugging the estimates \eqref{BDG GBSDE(f,L)Z}, \eqref{BDG GBSDE(f,L)V}, and \eqref{BDG GBSDE(f,L)M} into It\^{o}'s formula with the semimartingale $ \left|Y^n_s-Y^p_s \right|^2$ on $[t,T]$ implies the existence of a universal constant $\mathfrak{c}>0$ such that
				\begin{equation*}
					\begin{split}
						\left\| Y^n-Y^p \right\|^2_{\mathcal{D}^2}&=\mathbb{E}\left[ \sup_{0 \leq t \leq T}  \left| Y^n_t-Y^p_t \right|^2 \right]\\
						& \leq \mathfrak{c} \left( \mathbb{E}\left[ \int_0^T  \left| Y^n_s-Y^p_s \right|^2 ds \right]+ \| Z^n-Z^p \|_{M^2(\mathbb{R}^d)}^2+\| V^n-V^p \|_{M^2(\mathbb{L}^2_Q)}^2+\| M^n-M^p \|_{\mathbb{M}^2}^2\right.\\
						& \qquad \qquad  \left.  +\mathbb{E}\left[ \int_0^T \left( Y^n_s-L_s  \right)^{-} dK^p_s  \right]  +\mathbb{E}\left[ \int_0^T  \left( Y^p_s-L_s  \right)^{-} dK^n_s  \right] \right) \xrightarrow[n,p \rightarrow +\infty]{} 0.
					\end{split}
				\end{equation*}
				Then
				\begin{equation*}
					\lim \limits_{n \rightarrow +\infty} \mathbb{E}\left[ \sup_{0 \leq t \leq T} \left| Y^n_t-Y_t \right|^2 \right] = 0, \quad \text{ and } Y \in \mathcal{D}^2.
					\label{Convergence in Yn with JSM }
				\end{equation*}
				On the other hand, we have,  $Y^0_t \leq Y^n_t \leq Y_t$, for all $0 \leq t \leq T$ and all $n \in \mathbb{N}$. Then,
				$\left| Y^n_t \right| \leq  \left| Y^0_t \right| \vee \left| Y_t \right| $, $\forall n \in \mathbb{N}$ with $Y^0, Y \in \mathcal{D}^2 \cap M^{2,A}$. Using the linear growth of $\mathfrak{f}$ and $g$, is follows that
				\begin{itemize}
					\item $\sup_{n \geq 0} \left| \mathfrak{f}(s,Y^n_s) \right| \leq \bar{\varphi}_s + \kappa \left\{ \left| Y^0_t \right| \vee \left| Y_t \right| \right\}$ with $\mathbb{E}\left[ \int_0^T \bar{\varphi}_s^2 ds   \right]< \infty$,
					\item $\sup_{n \geq 0} \left| g(s,Y^n_s) \right| \leq \psi_s + \kappa \left\{ \left| Y^0_t \right| \vee \left| Y_t \right| \right\}$ with $\mathbb{E}\left[ \int_0^T \psi_s^2 dA_s   \right]< \infty$.
				\end{itemize}
				Furthermore, by \eqref{Converegence by the dominate convergence theorem for the GRBSDE} and the continuity of the random functions $f$, $g$ in $y$, we get
				$\mathfrak{f}(s,Y^n_s)  \xrightarrow[n \rightarrow \infty]{}  \mathfrak{f}(s,Y_s)$ and $
				g(s,Y^n_s)  \xrightarrow[n \rightarrow \infty]{}  g(s,Y_s)$ for almost all $(t,\omega) \in \Omega \times [0,T]$. Hence, from the dominate convergence theorem, we deduce that,  $\mathbb{P}$-a.s., $\forall t \in [0,T]$,
				\begin{itemize}
					\item $\mathfrak{f}(s,Y^n_s)ds \xrightarrow[n \rightarrow \infty]{}  \mathfrak{f}(s,Y_s)ds$ in $\mathbb{L}^2(\Omega \times [0,T],d\mathbb{P}\otimes dt;\mathbb{R})$,
					\item $ g(s,Y^n_s)dA_s \xrightarrow[n \rightarrow \infty]{}  g(s,Y_s)dA_s$ in $\mathbb{L}^2(\Omega \times [0,T],d\mathbb{P}\otimes dA_t;\mathbb{R})$.
				\end{itemize}
				Next, a standard calculations by the means of the B-D-G inequality, Cauchy-Schwarz inequality, implies that, for some constant $\mathfrak{c}>0$,
				\begin{equation*}
					\begin{split}
						&\mathbb{E}\left[ \sup_{0 \leq t \leq T} \left| K^n_t-K^p_t \right|^2 \right]\\
						& \leq \mathfrak{c} \left(\mathbb{E}\left[ \sup_{0 \leq t \leq T} \left| Y^n_t-Y^p_t \right|^2 \right]+  \mathbb{E}\left[ \int_0^T \left|  \mathfrak{f}(s,Y^n_s)-\mathfrak{f}(s,Y^p_s) \right|^2 ds \right]   +\mathbb{E}\left[ \int_0^T e^{\mu A_s}\left|g(s,Y^n_s)-g(s,Y^p_s) \right|^2 dA_s \right] \right.\\
						&\qquad \qquad \left.+\|Z^n-Z^p \|^2_{M^2(\mathbb{R}^d)}+\|V^n-V^p \|^2_{M^2(\mathbb{L}^2_Q)}+\|M^n-M^p \|^2_{\mathbb{M}^2} \right) \xrightarrow[n,p \rightarrow +\infty]{} 0
					\end{split}
				\end{equation*}
				Consequently, there exists an $\mathbb{F}$-predictable non-decreasing  process $K:=(K_t)_{0 \leq t \leq T}$ ($K_0=0$) such that
				\begin{equation*}
					\lim\limits_{n \rightarrow \infty} \mathbb{E}\left[ \sup_{0 \leq t \leq T} \left| K^n_t-K_t \right|^2 \right]=0
				\end{equation*}
				Finally, it is easy to see that the limiting process $(Y_t,Z_t,V_t,K_t,M_t)_{0 \leq t \leq T}$ satisfies
				\begin{equation}
					\begin{split}
						&Y_t=\xi+\int_t^T \mathfrak{f}(s,Y_s)ds+\int_t^T g(s,Y_s)dA_s+(K_T-K_t) -\int_t^T Z_s dB_s -\int_t^T \int_E V_s(e) \tilde{N}(ds,de)-\int_t^T dM_s.
					\end{split}
					\label{equation satified by the limitting process Y : one reflected case}
				\end{equation}
				Which completes the proof of the Lemma \ref{Lemma 0316}.
			\end{proof}

			Now, we demonstrate the minimality condition ensured by the limiting process $\left(K_t\right)_{0 \leq t \leq T}$.
			\subparagraph*{Step 6: Verification of the Skorokhod Condition}
			\begin{lemma}
				The non-decreasing process $K:=(K_t)_{0 \leq t \leq T}$ is RCLL and satisfies,
				\begin{equation*}
					\int_0^T (Y_s-L_s)dK^{c}_s=0, \quad\text{ and }\quad K^d_t=\sum_{0 < s \leq t}(Y_s-L_{s-})^{-}\mathds{1}_{\{\Delta L_{s}<0\}} \text{ for any } t \in [0,T],
				\end{equation*}
				where $K^c$ is the continuous (non-decreasing) non-part of $K$ and and $K^d$ it's the  purely discontinuous part (predictable and non-decreasing) such that $K=K^c+K^d$.
				\label{Lemma : Skorokhod condition of the GBSDE with one lower barrier}
			\end{lemma}
			\begin{proof}
				We've observed that the sequence $\left\{Y^n,K^n\right\}$ converges uniformly in $t$ in probability to $(Y, K)$, where $K=K^c+K^d$. From \eqref{equation satified by the limitting process Y : one reflected case}, it follows that for any $t \leq T$, $\Delta K^d_t=\sum_{0 < s \leq t} (Y_s-L_{s-})^{-}\mathds{1}_{\{\Delta L_s <0 \}}$. The sequence of measures $(dK^n_t)_{n \in \mathbb{N}}$ converges in total variation norm to $dK_t$, $\mathbb{P}$-a.s., for all $t \in [0,T]$. Specifically, we have the following convergences in the sense of measures:
				\begin{itemize}
					\item[($*$)] $K^d_t =\lim\limits_{n \rightarrow \infty} K^{n,d}_t=\lim\limits_{n \rightarrow \infty} n \sum_{0 < s \leq t} (Y^n_s-L_{s-})^{-} \mathds{1}_{\{\Delta L_s <0 \}}=\sum_{0 < s \leq t} (Y_s-L_{s-})^{-} \mathds{1}_{\{\Delta L_s <0 \}}$
					\item[($**$)] $ d K^{n,c}_t\rightarrow dK^{c}_t, \text{ weakly in } [0,T]$.
				\end{itemize}
				Using the convergence property of the stochastic integral in the \textsc{UCP} topology, we can easily deduce that
				\begin{equation*}
					\int_0^T L_sdK^{n,c}_s \xrightarrow[n \rightarrow \infty]{\mathbb{P}} \int_0^T L_sdK^{c}_s
					\text{ and }
					\int_0^T Y_sdK^{n,c}_s \xrightarrow[n \rightarrow \infty]{\mathbb{P}} \int_0^T Y_sdK^{c}_s.
					\label{ucp convergence}
				\end{equation*}
				Furthermore, from the Cauchy-Schwarz inequality, we obtain
				\begin{equation*}
					\begin{split}
						\mathbb{E}\left[\left| \int_0^T (Y^n_s-Y_s)dK^{n,c}_s \right|  \right]
						\leq  \left(\mathbb{E}\left[ \sup_{0 \leq s \leq T} e^{\mu A_s}\left| Y^n_s-Y_s \right|^2 \right]\right)^\frac{1}{2}
						\left(\mathbb{E}\left| K^{n}_T \right|^2   \right)^\frac{1}{2} \xrightarrow[n \rightarrow \infty]{} 0.
					\end{split}
				\end{equation*}
				Finally,
				\begin{equation*}
					\begin{split}
						&\int_0^T (Y^n_s-L_s)dK^{n,c}_s-\int_0^T (Y_s-L_s)dK^{c}_s\\
						&=\int_0^T (Y^n_s-Y_s) dK^{n,c}_s+\int_0^T Y_s dK^{n,c}_s-\int_0^T Y_s dK^c_s- \int_0^TL_sdK^{n,c}_s +\int_0^T L_s dK^c_s
						\xrightarrow[n \rightarrow \infty]{\mathbb{P}} 0.
					\end{split}
				\end{equation*}
				Hence, passing again to a subsequence, we get
				\begin{equation*}
					\begin{split}
						&\int_0^T (Y^n_s-L_s)dK^{n,c}_s\xrightarrow[n \rightarrow \infty]{\text{ a.s. }} \int_0^T (Y_s-L_s)dK^{c}_s
					\end{split}
				\end{equation*}
				Remark that $\int_0^T (Y^n_s-L_s)dK^{n,c}_s \leq 0$, $\forall n \in \mathbb{N}$,  which implies that  $\int_0^T (Y_s-L_s)dK^{c}_s \leq 0$. On the other hand, since $Y_t \geq L_t$ for any $t \leq T$, we deduce that $\int_0^T (Y_s-L_s)dK^{c}_s \geq 0$.\\
				We conclude that $\int_0^T (Y_t-L_t)dK^{c}_t=0$ which gives the desired result.
			\end{proof}
			To finalize the proof of Theorem \ref{th f ind zv}, we must establish the strong integrability condition for the quadruplet $(Y,Z,V,M)$.
			\subparagraph*{Step 7: The limiting process $(Y,Z,V,M)$ belongs to $\mathcal{S}^{2}_{\mu}(A;\mathbb{R}) \times M^2_{\mu}(\mathbb{R}^d) \times M^2_{\mu}(\mathbb{L}^2_Q)\times \mathbb{M}_{\mu}^2$}
			Let now $(Y,Z,V,K,M)$ be the  solution of the GRBSDE associated with $(\xi,\mathfrak{f},g,A,L)$ constructed previously, which satisfies
			\begin{equation*}
				\begin{split}
					\mathbb{E}\left[\sup_{0 \leq t \leq T} \left| Y_t \right|^2 +\int_0^T  \left| Y_s \right|^2 dA_s+ \int_0^T \left\{ \left| Y_s \right|^2+\left\| Z_s \right\|^2 +\left\| V_s \right\|_Q^2  \right\} ds+[M]_T \right]<+\infty.
				\end{split}
			\end{equation*}
			The following lemma is useful in order to obtain the existence result in the general case when $f$ also depends on $(z,v)$ via a fixed point argument.
			\begin{lemma}
				For $\mu >1$, there exists a constant $\mathfrak{c}=\mathfrak{c}_{\mu,\alpha,\beta,\kappa}>0$ such that
				\begin{equation*}
					\begin{split}
						&\mathbb{E}\left[\sup_{0 \leq t \leq T} e^{\mu A_t}\left| Y_t \right|^2+ \int_0^T e^{\mu A_s}\left\{ \left| Y_s \right|^2dA_s+\left( \left\| Z_s \right\|^2 +\left\| V_s \right\|_Q^2 \right)ds  \right\} ds+\int_0^T e^{\mu A_s}d[M]_s +(K_T)^2\right]\\
						&\leq \mathfrak{c}\left\{ \mathbb{E}\left[e^{\mu A_T} \left| \xi \right|^2 \right] +\mathbb{E}\left[\int_0^T  e^{\mu A_s} \left(\varphi^2_s ds+\psi^2_s dA_s  \right) \right]+\mathbb{E}\left[\sup_{0 \leq t \leq T} \left| e^{\mu A_t} L_t^{+}\right|^2\right]   \right\}
					\end{split}
					\label{estimation for step 5}
				\end{equation*}
				\label{Step 5 : estimation lemma}
			\end{lemma}
			\begin{proof}
				Due to the lack of integrability of the processes $(Y,Z,V,M)$, we proceed by localization. Let $\{\tau_k\}_{k \geq 1}$ be a sequence of $\mathbb{F}$-stopping times defined as $\tau_k=\inf\{t \geq 0 : A_t \geq k \} \wedge T$ for $k \geq 1$. Note that $(\tau_k)_{k \geq 1}$ is a sequence of non-decreasing $\mathbb{F}$-stopping times of stationary type that converges to $T$ because, from Remark \ref{Rmq 1}, we have $\mathbb{P}$-a.s. $A_T < \infty$. Hence, it suffices to prove the result in the case where $A_T$ is bounded, and then apply Fatou's lemma (see  \cite[Proposition 1]{ELMandELOVMSTA} for a similar procedure).
				
				Next, by employing Itô's formula with the monotonicity and linear growth assumptions on the drivers $\mathfrak{f}$ and $g$, along with the Skorokhod condition \eqref{Skorohod condition}, and subsequently taking expectations, we can follow a procedure similar to that employed in Lemma \ref{Lemma of UI} to obtain the desired result.
			\end{proof}
			
			The proof of Theorem \ref{th f ind zv} is now complete.
		\end{proof}

		Now, we turn to the general case of a generator $f$ depending on $(y,z,v)$.
		\paragraph*{Part 2: General case}
		In this part, the existence results will be obtained via a fixed point argument using the result from \textbf{Part 1}. Actually, let us set $\mathcal{B}^2_{\gamma,\mu}=M^2(\mathbb{R}) \times M^2_{\mu}(\mathbb{R}^d )\times M^2_{\mu}(\mathbb{L}^2_Q) \times \mathbb{M}_{\mu}^2$ endowed with the norm
		\begin{equation*}
			\|(Y,Z,V,M)  \|^2_{\gamma}= \mathbb{E}\left[ \int_0^T \Phi^{\gamma,\mu}_s \left( \left\{ \left| Y_s \right|^2+\| Z_s \|^2  +\| V_s \|^2_Q \right\} ds+ d[M]_s \right)  \right];\quad \gamma >0,\mu >1,
		\end{equation*}
		with $\Phi^{\gamma,\mu}_t=e^{\gamma t+\mu A_t}$, $t \in [0,T]$.
		
		Let $\Psi$ be the map from $\mathcal{B}^2_{\gamma,\mu}$ into itself defined by $\Psi(Y,Z,V,M)=(\bar{Y},\bar{Z},\bar{V},\bar{M})$ solution of the GRBSDE associated with parameters $(\xi,f(t,y,Z_t,V_t),g,A,L)$. Due to Theorem \ref{th f ind zv} this mapping is well defined.\\
		Let $(Y^{\prime},Z^{\prime},V^{\prime},M^{\prime})$ be another element of $\mathcal{B}^2_{\gamma,\mu}$ such that $\Psi(Y^{\prime},Z^{\prime},V^{\prime},M^{\prime})=(\bar{Y}^{\prime},\bar{Z}^{\prime},\bar{V}^{\prime},\bar{M}^{\prime})$.
		
		Applying It\^{o}'s formula and taking the expectation, it follows for $ \gamma > 0$
		\begin{equation}
			\begin{split}
				&\mathbb{E}\int_0^T \Phi^{\gamma,\mu}_s \left\{ \left( \gamma \left| \bar{Y}_s-\bar{Y}^{\prime}_s \right|^2  + \| \bar{Z}_s-\bar{Z}^{\prime}_s \|^2+\left\| \bar{V}_s-V^{\prime}_s \right\|^2_Q\right)ds+\left(\mu -\beta \right)  \left|  \bar{Y}_s-\bar{Y}^{\prime}_s \right|^2 dA_s+d\left[\bar{M}-\bar{M}^{\prime} \right]_s \right\} \\
				&\leq 2\mathbb{E}\left[ \int_0^T \Phi^{\gamma,\mu}_s(\bar{Y}_s-\bar{Y}^{\prime}_s)\left\lbrace (f(s,\bar{Y}_s,Z_s,V_s)-f(s,\bar{Y}^{\prime}_s,Z_s,V_s))+f(s,\bar{Y}^{\prime}_s,Z_s,V_s)-f(s,\bar{Y}^{\prime}_s,Z^{\prime}_s,V^{\prime}_s)\right\rbrace  ds \right]\\
				&\quad + 2\mathbb{E}\left[ \int_0^T \Phi^{\gamma,\mu}_s \left( \bar{Y}_s-\bar{Y}^{\prime}_s \right) \left( g(s,\bar{Y}_s)-g(s,\bar{Y}^{\prime}_s) \right) dA_s \right]\\
				& \leq \left( 2 \left| \alpha \right| +4 \kappa^2 \right) \mathbb{E}\left[ \int_0^T\Phi^{\gamma,\mu}_s \left| \bar{Y}_s-\bar{Y}^{\prime}_s \right|^2 ds \right]+\dfrac{1}{2}\mathbb{E}\left[ \int_0^T \Phi^{\gamma,\mu}_s \left\{ \left\| Z_s-Z^{\prime}_s \right\|^2 + \| V_s-V^{\prime}_s \|_Q^2 \right\} ds \right].
			\end{split}
			\label{Going back to this --after reflection inequality}
		\end{equation}
		This is due to the assumption \textbf{(H2)}-(iii)-(iv), the martingale property of the stochastic integrals using the Burkholder-Davis-Gundy's inequality and the fact that
		\begin{equation*}
			\int_{t }^{T} \Phi^{\gamma,\mu}_s(\bar{Y}_{s-}-\bar{Y}^{\prime}_{s-})(d\bar{K}_s-d\bar{K}'_s) \leq 0,\quad \forall t \leq T,
		\end{equation*}
		which stems from the Skorokhod conditions satisfied by the processes $(\bar{Y}_t)_{0 \leq t \leq T}$ and $(\bar{Y}^{\prime}_t)_{0 \leq t \leq T}$ under the actions of the two reflection processes $\bar{K}$ and $\bar{K}^{\prime}$ over $[t,T]$. Indeed, for any $t \in [0,T]$,
		\begin{equation*}
			\begin{split}
				&\int_{t}^{T} \Phi^{\gamma,\mu}_s(\bar{Y}_{s-}-\bar{Y}^{\prime}_{s-})(d\bar{K}_s-d\bar{K}'_s)=\int_{t}^{T} \Phi^{\gamma,\mu}_s(\bar{Y}_{s-}-\bar{Y}^{\prime}_{s-})(d\bar{K}^{c}_s-d\bar{K}^{\prime c}_s) +\int_{t}^{T} \Phi^{\gamma,\mu}_s(\bar{Y}_{s-}-\bar{Y}^{\prime}_{s-})(d\bar{K}^{d}_s-d\bar{K}^{\prime d}_s)
			\end{split}
		\end{equation*}
		The processes $(\bar{Y}_t)_{t \in [0,T]}$ and $(\bar{Y}^{\prime}_t)_{0 \leq t \leq T}$ are RCLL with non-negative jumps. Therefore, the jump sets $\delta_t(\omega)=\{s \in [t,T] : \Delta \bar{Y}_s\neq 0\}$ and $\delta_t^{\prime}(\omega)=\{s \in [t,T] : \Delta \bar{Y}'_s\neq 0\}$ are at most countable. It follows that
		\begin{equation*}
			\begin{split}
				\int_{t}^{T} \Phi^{\gamma,\mu}_s(\bar{Y}_{s-}-\bar{Y}^{\prime}_{s-})(d\bar{K}^{c}_s-d\bar{K}^{\prime c}_s) 
				&=-\int_{t}^{T} \Phi^{\gamma,\mu}_s(\bar{Y}_{s}-L_{s})d\bar{K}^{\prime c}_s+\int_{t}^{T} \Phi^{\gamma,\mu}_s (L_s-\bar{Y}^{\prime}_{s})d\bar{K}^{c}_s \leq 0.
			\end{split}
		\end{equation*}
		On the other hand,
		\begin{equation}
			\begin{split}
				\int_{t}^{T}\Phi^{\gamma,\mu}_s(\bar{Y}_{s-}-\bar{Y}^{\prime}_{s-})(d\bar{K}^{d}_s-d\bar{K}^{\prime d}_s)
				=\int_{t}^{T} \Phi^{\gamma,\mu}_s(\bar{Y}_{s-}-\bar{Y}^{\prime}_{s-})d\bar{K}^{d}_s-\int_{t}^{T}\Phi^{\gamma,\mu}_s(\bar{Y}_{s-}-\bar{Y}^{\prime}_{s-}) d\bar{K}^{\prime d}_s
			\end{split}
			\label{least or equal to zero 1}
		\end{equation}
		and
		\begin{equation}
			\begin{split}
				&\int_{t}^{T}\Phi^{\gamma,\mu}_s(\bar{Y}_{s-}-\bar{Y}^{\prime}_{s-}) d\bar{K}^{\prime d}_s
				=\int_{t}^{T} \Phi^{\gamma,\mu}_s(\bar{Y}_{s-}-\bar{Y}_{s}) d\bar{K}^{\prime d}_s
				+\int_{t}^{T} \Phi^{\gamma,\mu}_s(\bar{Y}_{s}-\bar{Y}^{\prime}_{s}) d\bar{K}^{\prime d}_s+\int_{t}^{T} \Phi^{\gamma,\mu}_s(\bar{Y}^{\prime}_{s}-\bar{Y}^{\prime}_{s-}) d\bar{K}^{\prime d}_s.
			\end{split}
			\label{least or equal to zero 2}
		\end{equation}
		From the definition of the predictable jumps of the processes $(\bar{Y}_t)_{0 \leq t \leq T}$ and $(\bar{Y}'_t)_{0 \leq t \leq T}$, the first and last terms on the right-hand side of \eqref{least or equal to zero 2} are equal to $\sum_{t< s \leq T} \Psi^{\gamma,\mu}_s \Delta \bar{K}^d_s \Delta \bar{K}^{\prime d}_s$ and $-\sum_{t< s \leq T} \Psi^{\gamma,\mu}_s \left(\Delta \bar{Y}^{\prime}_s \right)^2$, respectively. For the second term, we have
		\begin{equation*}
			\begin{split}
				&\int_{t}^{T}\Phi^{\gamma,\mu}_s(\bar{Y}_{s}-\bar{Y}^{\prime}_{s}) d\bar{K}^{\prime d}_s\\
				&=\int_{t}^{T} \Phi^{\gamma,\mu}_s(\bar{Y}_{s}-L_{s-}) d\bar{K}^{\prime d}_s+\int_{t}^{T} \Phi^{\gamma,\mu}_s(L_{s-}-\bar{Y}^{\prime}_{s}) d\bar{K}^{\prime d}_s\\
				&=\int_{t}^{T} \Phi^{\gamma,\mu}_s(\bar{Y}_{s}-L_{s-})\mathds{1}_{\{ \Delta \bar{Y}_{s} =0  \}} d\bar{K}^{\prime d}_s+\int_{t}^{T} \Phi^{\gamma,\mu}_s(\bar{Y}_{s}-L_{s-})\mathds{1}_{\{ \Delta \bar{Y}_{s} \neq 0  \}} d\bar{K}^{\prime d}_s
				+\int_{t}^{T} \Phi^{\gamma,\mu}_s(L_{s-}-\bar{Y}^{\prime}_{s}) d\bar{K}^{\prime d}_s\\
				& \geq -\sum_{t < s \leq T}\Phi^{\gamma,\mu}_s \Delta \bar{K}^d_s \Delta \bar{K}^{\prime d}_s+\sum_{t < s \leq T}\Phi^{\gamma,\mu}_s \left(\Delta \bar{Y}^{\prime}_s \right)^2.
			\end{split}
		\end{equation*}
		Plugging this into \eqref{least or equal to zero 1}, we deduce that $\int_{t}^{T} \Phi^{\gamma,\mu}_s(\bar{Y}_{s-}-\bar{Y}^{\prime}_{s-})(d\bar{K}^{d}_s-d\bar{K}^{\prime d}_s) \leq 0$ for any $t \in [0,T]$. Going back to \eqref{Going back to this --after reflection inequality}, choosing $\gamma = 1+2\left| \alpha \right| +4 \kappa^2$ and $\mu \geq 1$, we obtain
		\begin{equation*}
			\begin{split}
				&\mathbb{E}\left[\int_0^T \Phi^{\gamma,\mu}_s\left( \left\{  \left| \bar{Y}_s-\bar{Y}^{\prime}_s \right|^2 + \| \bar{Z}_s-\bar{Z}^{\prime}_s \|^2+\| \bar{V}_s-\bar{V}^{\prime}_s \|_Q^2 \right\}ds + \left|  \bar{Y}_s-\bar{Y}^{\prime}_s \right|^2 dA_s +d\left[\bar{M}-\bar{M}^{\prime} \right]_s \right)\right]\\
				& \leq \dfrac{1}{2}\mathbb{E}\left[ \int_0^T \Phi^{\gamma,\mu}_s\left( \left\{ \left| Y_s-Y^{\prime}_s \right|^2+\left\| Z_s-Z^{\prime}_s \right\|^2 + \| V_s-V^{\prime}_s \|_Q^2 \right\} ds+\left| Y_s-Y^{\prime}_s \right|^2 dA_s+d\left[ M-M^{\prime}\right]_s \right)\right].
			\end{split}
		\end{equation*}
		Henceforth
		\begin{equation*}
			\left\| \Psi(Y,Z,V,M)-\Psi(Y^{\prime},Z^{\prime},V^{\prime},M^{\prime}) \right\|^2_{\gamma,\mu} \leq \dfrac{1}{2} \left\| Y-Y^{\prime},Z-Z^{\prime},V-V^{\prime},M-M^{\prime}  \right\|^2_{\gamma,\mu}
		\end{equation*}
		
		Finally, $\Psi$ is a strict contraction on the Banach space $\mathcal{B}^2_{\mu,\gamma}$ equipped with the norm $\left\| \cdot \right\|_{\gamma,\mu}$ and by Banach fixed point theorem we conclude that $\Psi$ has a unique fixed point $(Y,Z,V,M)$ which, with $K$ solves the GRBSDE \eqref{basic equation for one lower reflecting barrier--General case}.
	\end{proof}
	\subsection{Generalized BSDEs with one upper reflecting barrier}
	\textbf{(H3")} Let $(U_t)_{0 \leq t \leq T}$ be an $\mathcal{F}_t$-progressively measurable RCLL real-valued process satisfying:
	\begin{equation*}
		\mathbb{E}\left[ \sup_{0 \leq t \leq T} \left| e^{\mu A_t} U_t^{-} \right|^2 \right]<\infty,\quad \text{ and } \quad \xi \leq U_T,~\text{ a.s.}
	\end{equation*}
	
	Following a similar argument as the one used in the proof of Theorem \ref{Theorem : GRBSDE with one lower barrier--Main results}, we may show the following result:
	\begin{theorem}
		Assume that conditions \textbf{(H1)}, \textbf{(H2)} and \textbf{(H3")} holds. Then, the GRBSDE with one upper reflecting barrier $U$ associated with $(\xi,f,g,A,U)$:
		\begin{equation}
			\left\{
			\begin{split}
				\text{(i)}&~Y_t= \xi+\int_t^T f(s,Y_s,Z_s,V_s)ds+\int_t^T g(s,Y_s)\mathrm{d}A_s-(K_T-K_t)\\
				&\qquad\qquad-\int_t^T Z_s d B_s -\int_t^T \int_E V_s(e)\tilde{N}(ds,de)-\int_t^T dM_s,\quad 0 \leq t \leq T.\\
				\text{(ii)} &~ Y \leq U, \text{ and if } K^{c} \text{ is the continuous part of } K, \text{ then }
				\int_{0}^{T}(U_t-Y_t)dK^{c}_t=0.\\
				\text{(iii)} &~ \text{ If } K^{d} \text{ is the purely discontinuous part of } K,
				\text{ then } K^{d}_t=\sum_{0 < s \leq t}(Y_s-U_{s-})^{+}\mathds{1}_{\{U_{s-}=Y_{s-}\}},
			\end{split}
			\right.
			\label{basic equation for one upper reflecting barrier}
		\end{equation}
		admit a unique solution $(Y,Z,V,K,M) \in \mathcal{S}^{2}_{\mu}(A;\mathbb{R}) \times M^2_{\mu}(\mathbb{R}^d) \times M^2_{\mu}(\mathbb{L}^2_Q) \times \mathcal{S}^2 \times \mathbb{M}_{\mu}^2$. Moreover,  there exists a constant $\mathfrak{c}>0$ such that
		\begin{equation*}
			\begin{split}
				& \mathbb{E}\left[   \sup_{0 \leq t \leq T}  e^{\mu A_t} \left| Y_t \right|^2+  \int_0^T e^{\mu A_s}  \left| Y_s \right|^2 dA_s+  \int_0^T e^{\mu A_s}  \left\lbrace  \| Z_s\|^2+\left\| V_s \right\|^2_Q \right\rbrace  ds + \int_0^T e^{\mu A_s}  d[M ]_s + \left( K_T \right)^2 \right]\\
				&\leq  \mathfrak{c} \mathbb{E}\left[  e^{\mu A_T}\left| \xi \right|^2 + \int_0^T e^{\mu A_s}  \varphi_s^2 ds
				+\int_0^T e^{\mu A_s}  \psi^2_s dA_s + \sup_{0 \leq t \leq T} e^{2\mu A_t} (U_t^{-})^2  \right].
			\end{split}
			\label{Estimation for the upper barrier : main result}
		\end{equation*}
		
		\label{Corllary of the GRBSDE with one reflecting upper barrier}
	\end{theorem}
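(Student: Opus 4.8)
The plan is to deduce Theorem~\ref{Corllary of the GRBSDE with one reflecting upper barrier} from the lower-barrier result, Theorem~\ref{Theorem : GRBSDE with one lower barrier--Main results}, by a reflection argument rather than by rerunning the penalization scheme. Set $\tilde{Y}=-Y$, $\tilde{Z}=-Z$, $\tilde{V}=-V$, $\tilde{M}=-M$, $\tilde{\xi}=-\xi$, and introduce the reflected barrier $\tilde{L}=-U$ together with the reflected coefficients $\tilde{f}(t,y,z,v)=-f(t,-y,-z,-v)$ and $\tilde{g}(t,y)=-g(t,-y)$. Multiplying \eqref{basic equation for one upper reflecting barrier}-(i) by $-1$ and keeping $\tilde{K}=K$ turns the upper-obstacle dynamics into exactly the lower-obstacle equation \eqref{basic equation for one lower reflecting barrier--General case}-(i) for the data $(\tilde{\xi},\tilde{f},\tilde{g},A,\tilde{L})$, since $\int_t^T \tilde{f}(s,\tilde{Y}_s,\tilde{Z}_s,\tilde{V}_s)\,ds=-\int_t^T f(s,Y_s,Z_s,V_s)\,ds$ and likewise for the $g$-term, while the sign of the reflection contribution flips from $-(K_T-K_t)$ to $+(\tilde{K}_T-\tilde{K}_t)$; the Brownian, $\tilde{N}$ and martingale integrals transform consistently because $\tilde{Z}=-Z$, $\tilde{V}=-V$, $\tilde{M}=-M$.

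First I would check that $(\tilde{\xi},\tilde{f},\tilde{g},A,\tilde{L})$ satisfies \textbf{(H1)}, \textbf{(H2)}, \textbf{(H3)} with the \emph{same} structural constants. The monotonicity inequalities \textbf{(H2)}-(iii),(iv) are invariant: writing $u=-y$, $u'=-y'$ one has $(y-y')(\tilde{f}(t,y,\cdot)-\tilde{f}(t,y',\cdot))=(u'-u)(f(t,u',\cdot)-f(t,u,\cdot))\le\alpha|u'-u|^2=\alpha|y-y'|^2$, and $\beta$ is likewise unchanged for $\tilde{g}$; the Lipschitz constant $\kappa$ in \textbf{(H2)}-(v) and the growth bounds \textbf{(H2)}-(vi) with the same $\varphi,\psi$ persist because these are all phrased through absolute values. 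Condition \textbf{(H1)} holds since $\mathbb{E}[e^{\mu A_T}|\tilde{\xi}|^2]=\mathbb{E}[e^{\mu A_T}|\xi|^2]$, and \textbf{(H3)} follows from \textbf{(H3'')}: the ordering $\tilde{L}_T=-U_T\le-\xi=\tilde{\xi}$ is precisely $\xi\le U_T$, while $\mathbb{E}[\sup_{t}|e^{\mu A_t}\tilde{L}_t^{+}|^2]=\mathbb{E}[\sup_{t}|e^{\mu A_t}U_t^{-}|^2]<\infty$. Theorem~\ref{Theorem : GRBSDE with one lower barrier--Main results} then supplies a unique $(\tilde{Y},\tilde{Z},\tilde{V},\tilde{K},\tilde{M})\in\mathcal{S}^{2}_{\mu}(A;\mathbb{R})\times M^2_{\mu}(\mathbb{R}^d)\times M^2_{\mu}(\mathbb{L}^2_Q)\times\mathcal{S}^2\times\mathbb{M}_{\mu}^2$, and setting $Y=-\tilde{Y}$, $Z=-\tilde{Z}$, $V=-\tilde{V}$, $M=-\tilde{M}$, $K=\tilde{K}$ produces a solution in the required spaces, which are stable under negation since their norms involve only $|\cdot|^2$, $\|\cdot\|^2_Q$, and $[\,\cdot\,]=[-\,\cdot\,]$; uniqueness transfers through the bijection.

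The step needing genuine care, and the main obstacle, is the translation of the Skorokhod conditions, since the upper-barrier jump condition \eqref{basic equation for one upper reflecting barrier}-(iii) is written on $\{U_{s-}=Y_{s-}\}$ whereas the lower-barrier minimality is phrased through $\{\Delta L_s<0\}$ in Lemma~\ref{Lemma : Skorokhod condition of the GBSDE with one lower barrier}. For the continuous part this is immediate: $\int_0^T(\tilde{Y}_s-\tilde{L}_s)\,d\tilde{K}^c_s=\int_0^T(U_s-Y_s)\,dK^c_s=0$. For the jump part, $(\tilde{Y}_s-\tilde{L}_{s-})^{-}=(U_{s-}-Y_s)^{-}=(Y_s-U_{s-})^{+}$ and $\{\tilde{Y}_{s-}=\tilde{L}_{s-}\}=\{Y_{s-}=U_{s-}\}$, so the lower-barrier characterization $\tilde{K}^d_t=\sum_{0<s\le t}(\tilde{Y}_s-\tilde{L}_{s-})^{-}\mathds{1}_{\{\tilde{Y}_{s-}=\tilde{L}_{s-}\}}$ becomes exactly \eqref{basic equation for one upper reflecting barrier}-(iii), where one invokes the equivalence of the two forms of the minimality condition recorded in Remark~\ref{Propertie of the increasing processes for the lower case}. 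Finally, applying the a priori estimate of Lemma~\ref{Step 5 : estimation lemma} to $(\tilde{Y},\tilde{Z},\tilde{V},\tilde{K},\tilde{M})$ and substituting back $\tilde{Y}=-Y$, $\tilde{Z}=-Z$, $\tilde{V}=-V$, $\tilde{M}=-M$, together with $\tilde{L}^{+}=U^{-}$, reproduces verbatim the claimed inequality, with $\sup_t e^{2\mu A_t}(L_t^{+})^2$ turning into $\sup_t e^{2\mu A_t}(U_t^{-})^2$. As an alternative one could rerun the seven-step penalization of Theorem~\ref{th f ind zv} with the decreasing penalty $-n(Y^n_s-U_s)^{+}$, but the reflection argument is shorter and avoids reproving all the intermediate estimates.
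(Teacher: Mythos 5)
Your proposal is correct, and it takes a genuinely different route from the paper. The paper's proof of Theorem \ref{Corllary of the GRBSDE with one reflecting upper barrier} consists of re-running the whole argument of Theorem \ref{Theorem : GRBSDE with one lower barrier--Main results} for the upper obstacle -- i.e., the seven-step penalization (now with a decreasing penalty $-n(Y^n_s-U_s)^{+}$), the comparison and Snell-type estimates, and the fixed-point step -- whereas you reduce the upper-barrier equation to the already-proved lower-barrier one via the involution $(Y,Z,V,M,\xi,U)\mapsto(-Y,-Z,-V,-M,-\xi,-U)$ with $\tilde f(t,y,z,v)=-f(t,-y,-z,-v)$, $\tilde g(t,y)=-g(t,-y)$ and $K$ unchanged. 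Your verification is the substantive content and it is sound: the monotonicity, Lipschitz and growth conditions are preserved with the same $\alpha,\beta,\kappa,\varphi,\psi$; \textbf{(H3'')} becomes \textbf{(H3)} since $\tilde L^{+}=U^{-}$; and the Skorokhod conditions correspond because the indicator forms are equivalent (when $\Delta K^d_s>0$, the dynamics $Y_{s-}=Y_s+\Delta K^d_s$ together with $Y\geq L$ force $Y_{s-}=L_{s-}$ and $\Delta L_s<0$, which is exactly the equivalence recorded in Remark \ref{Propertie of the increasing processes for the lower case}), and uniqueness transfers through the bijection. What your route buys is economy -- no duplication of the penalization and convergence analysis; what the paper's route buys is the decreasing approximation scheme for the upper problem itself, which is precisely what is needed when penalizing doubly reflected equations from above, as alluded to in the introduction. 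One caveat on your last step: Lemma \ref{Step 5 : estimation lemma} is stated only for drivers independent of $(z,v)$, so it does not apply verbatim to the transformed solution; freezing the driver as $\mathfrak{f}(t,y)=\tilde f(t,y,\tilde Z_t,\tilde V_t)$ replaces $\varphi_t$ by $\varphi_t+\kappa(\|\tilde Z_t\|+\|\tilde V_t\|_Q)$ and puts a multiple of $\mathbb{E}\int_0^T e^{\mu A_s}\{\|\tilde Z_s\|^2+\|\tilde V_s\|^2_Q\}\,ds$ on the right-hand side with a constant that cannot be absorbed for free, so the stated estimate requires redoing the It\^o/absorption computation with the splitting \textbf{(H2)}-(v) and a suitable choice of $\gamma,\mu$, as in the paper's fixed-point step. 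Since the paper itself asserts this estimate with no proof at all, this is a gap relative to a fully detailed argument rather than relative to the paper, and it is routine to fill.
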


		\subsection{Comparison principal}
	In this part, we provide some classical comparison theorems that will be used toward the rest of the paper.
	\begin{theorem}\label{Version comparison theorem -- independ of v}
		Assume that:
		\begin{itemize}
			\item[(i)] $f$ is independent of $v$ ,
			\item[(ii)] For any $t \leq T$, $f(t,Y^{\prime}_t,Z^{\prime}_t) \leq f^{\prime}(t,Y^{\prime}_t,Z^{\prime}_t,V^{\prime}_t)$, $g(t,Y^{\prime}_t) \leq g^{\prime}(t,Y^{\prime}_t)$ and $\xi \leq \xi^{\prime}$.
		\end{itemize}
		Then  $Y_t \leq Y^{\prime}_t$, $\forall t \leq T$ a.s.
	\end{theorem}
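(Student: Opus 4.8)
The plan is to compare the two solutions by a linearization argument applied to the positive part of their difference, exploiting that assumption (i) removes any need for a condition on the jump coefficients. Set $\widehat{Y}=Y-Y'$, $\widehat{Z}=Z-Z'$, $\widehat{V}=V-V'$ and $\widehat{M}=M-M'$, and write $\Phi^{\gamma,\mu}_s=e^{\gamma s+\mu A_s}$ for parameters $\gamma,\mu>0$ to be fixed. Because the two quadruplets need not be globally integrable enough to take expectations directly, I would first localize along the stopping times $\tau_k=\inf\{t\ge 0:A_t\ge k\}\wedge T$ used in Lemma \ref{Step 5 : estimation lemma}, establish the estimate on each $[0,\tau_k]$, and pass to the limit by Fatou's lemma; the localization is suppressed in what follows.

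The core computation is It\^o's formula (Theorem II.32 in \cite{Protter}) applied to the semimartingale $\Phi^{\gamma,\mu}_t\phi(\widehat{Y}_t)$ with the convex $\mathcal{C}^1$ function $\phi(y)=(y^+)^2$, whose derivatives are $\phi'(y)=2y^+$ and $\phi''(y)=2\mathds{1}_{\{y>0\}}$. After taking expectations, so that the $dB$, $\widetilde{N}$ and $d\widehat{M}$ martingale integrals drop out, and using $\widehat{Y}_T=\xi-\xi'\le 0$, which annihilates the terminal term, I obtain an identity whose left-hand side gathers $\mathbb{E}[\Phi^{\gamma,\mu}_t((\widehat{Y}_t)^+)^2]$, the weighted energy terms $\mathbb{E}\int_t^T\Phi^{\gamma,\mu}_s((\widehat{Y}_s)^+)^2(\gamma\,ds+\mu\,dA_s)$, the continuous bracket term $\mathbb{E}\int_t^T\Phi^{\gamma,\mu}_s\mathds{1}_{\{\widehat{Y}_{s-}>0\}}(\|\widehat{Z}_s\|^2\,ds+d[\widehat{M}^c]_s)$, and the jump sum $\mathbb{E}\sum_{t<s\le T}\Phi^{\gamma,\mu}_s[\phi(\widehat{Y}_s)-\phi(\widehat{Y}_{s-})-\phi'(\widehat{Y}_{s-})\Delta\widehat{Y}_s]$; all of these are nonnegative, the last by convexity of $\phi$. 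The right-hand side equals $2\,\mathbb{E}\int_t^T\Phi^{\gamma,\mu}_s(\widehat{Y}_s)^+(f(s,Y_s,Z_s)-f'(s,Y'_s,Z'_s,V'_s))\,ds+2\,\mathbb{E}\int_t^T\Phi^{\gamma,\mu}_s(\widehat{Y}_s)^+(g(s,Y_s)-g'(s,Y'_s))\,dA_s$.

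It then remains to estimate the two driver differences on the set $\{\widehat{Y}_s>0\}$, where $(\widehat{Y}_s)^+=Y_s-Y'_s$. I split $f(s,Y_s,Z_s)-f'(s,Y'_s,Z'_s,V'_s)$ into $[f(s,Y_s,Z_s)-f(s,Y'_s,Z_s)]+[f(s,Y'_s,Z_s)-f(s,Y'_s,Z'_s)]+[f(s,Y'_s,Z'_s)-f'(s,Y'_s,Z'_s,V'_s)]$. By the monotonicity \textbf{(H2)}-(iii) the first bracket contributes at most $\alpha((\widehat{Y}_s)^+)^2$; by the Lipschitz condition \textbf{(H2)}-(v), which (as $f$ is $v$-free) involves only $\|\widehat{Z}_s\|$, together with Young's inequality the second is dominated by $\kappa^2((\widehat{Y}_s)^+)^2+\mathds{1}_{\{\widehat{Y}_s>0\}}\|\widehat{Z}_s\|^2$, whose $\|\widehat{Z}_s\|^2$ part is exactly absorbed by the continuous bracket term on the left; and the third bracket is $\le 0$ by hypothesis (ii), hence discarded. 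Likewise $g(s,Y_s)-g'(s,Y'_s)=[g(s,Y_s)-g(s,Y'_s)]+[g(s,Y'_s)-g'(s,Y'_s)]$, where \textbf{(H2)}-(iv) bounds the first bracket by $\beta((\widehat{Y}_s)^+)^2$ with $\beta<0$ and the second is $\le 0$ by (ii), so both $dA_s$ contributions are nonpositive and move harmlessly to the left. Choosing $\gamma\ge 2\alpha+\kappa^2$ leaves $\mathbb{E}[\Phi^{\gamma,\mu}_t((\widehat{Y}_t)^+)^2]$ plus a sum of nonnegative terms bounded above by $0$, whence $\mathbb{E}[e^{\gamma t+\mu A_t}((\widehat{Y}_t)^+)^2]=0$ for each $t$; positivity of the exponential yields $(Y_t-Y'_t)^+=0$ a.s., and right-continuity of $Y,Y'$ upgrades this to $Y_t\le Y'_t$ for all $t\le T$ a.s.

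The main obstacle I anticipate is the treatment of the jumps, which is precisely where hypothesis (i) earns its keep: in the presence of the integer-valued measure a naive comparison would demand a Royer-type monotonicity condition on the dependence of $f$ on $v$, whereas here the jump contributions enter only through the sum $\sum[\phi(\widehat{Y}_s)-\phi(\widehat{Y}_{s-})-\phi'(\widehat{Y}_{s-})\Delta\widehat{Y}_s]$, which the convexity of $y\mapsto(y^+)^2$ renders nonnegative regardless of the sign of $\widehat{V}$. Since $f$ is independent of $v$, no term involving $\widehat{V}$ appears on the right-hand side at all, so this sum merely reinforces the inequality rather than having to be controlled. The residual difficulty is integrability, namely justifying the vanishing of the martingale terms and the finiteness of the jump sum, which is dispatched by the localization above combined with the $\mathcal{S}^2_{\mu}(A;\mathbb{R})$-integrability of the solutions.
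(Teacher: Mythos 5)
Your argument reproduces the paper's core computation (positive-part squared function, generalized It\^o formula, convexity to discard the jump sum, monotonicity plus Lipschitz-in-$z$ plus Young to absorb $\|\widehat{Z}\|^2$), but it has a genuine gap: you have silently treated $Y$ and $Y'$ as solutions of \emph{unreflected} generalized BSDEs. The theorem lives in the section on GRBSDEs: both solutions carry non-decreasing reflection processes $K$ and $K'$ (this is why the paper's subsequent remark can speak of $K^d$ versus $K'^d$), so the dynamics of $\widehat{Y}$ contain $-(dK_s-dK'_s)$, and the expansion of $\phi(\widehat{Y})$ produces on the right-hand side the additional term $\int_t^T \phi'(\widehat{Y}_{s-})\,(dK_s-dK'_s)$, which is nowhere in your proposal. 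This term is not negligible and does not have an automatic sign; it must be killed by the Skorokhod minimality condition: $dK_s$ only charges the set $\{Y_{s-}=L_{s-}\}$ (condition \eqref{Skorohod condition}), and on that set $\widehat{Y}_{s-}=L_{s-}-Y'_{s-}\leq 0$ because $Y'$ stays above the barrier, so $\phi'(\widehat{Y}_{s-})=2(\widehat{Y}_{s-})^{+}=0$ there; combined with $-\phi'(\widehat{Y}_{s-})\,dK'_s\leq 0$, one gets $\int_t^T \phi'(\widehat{Y}_{s-})(dK_s-dK'_s)\leq 0$, which is exactly the one-line observation the paper makes ("since $\int_t^T \psi'(Y_{s-}-Y'_{s-})(dK_s-dK'_s)\leq 0$") and which your write-up must contain for the stated theorem rather than only for its unreflected special case.

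Two smaller points. First, you invoke Theorem II.32 of \cite{Protter}, which is the classical It\^o formula for $\mathcal{C}^2$ functions; $\phi(y)=(y^{+})^2$ is only $\mathcal{C}^1$ with Lipschitz derivative, so the correct tool is the Meyer--It\^o formula (Theorem IV.70 of \cite{Protter}), as the paper uses -- your computation with $\phi''=2\mathds{1}_{\{y>0\}}$ is the right one, only the citation is wrong. Second, where the paper closes the estimate with Gronwall's lemma after the weighted $\|\widehat{Z}\|^2$ terms cancel, you instead absorb everything by the exponential weight $\Phi^{\gamma,\mu}$ with $\gamma$ large; this is an equivalent and perfectly acceptable variant, and your localization along $\tau_k=\inf\{t\geq 0: A_t\geq k\}\wedge T$ to justify taking expectations is a legitimate way to handle the integrability issue. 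But neither of these repairs the missing treatment of the reflection increments, which is the substantive defect.
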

	\begin{proof}
		We set : $\bar{Z}=Z-Z^{\prime}$, $\bar{V}=V-V^{\prime}$, $\bar{M}=M-M^{\prime}$ and $\psi(x)=(x^{+})^2$, $x \in \mathbb{R}$.
		The main tool is to make us use of Meyer-It\^{o}'s formula (see \cite{Protter}, Theorem IV.70) to $\psi(Y-Y^{\prime})$ between $t$ and $T$, which gives
		\begin{equation*}
			\begin{split}
				&\psi(Y_t-Y^{\prime}_t)+\int_t^T \mathds{1}_{ \{Y_{s-}>Y^{\prime}_{s-} \} } \left\| \bar{Z}_s \right\|^2 ds
				+\int_t^T \mathds{1}_{ \{Y_{s-}>Y^{\prime}_{s-} \} }  d \left\langle \bar{M}^c\right\rangle_s \\
				&\leq \psi(\xi-\xi^{\prime})+ \int_t^T  \psi^{\prime}(Y_{s-}-Y^{\prime}_{s-}) \left(f(s,Y_s,Z_s)-f^{\prime}(s,Y^{\prime}_s,Z^{\prime}_s,V^{\prime}_s)\right)ds\\
				&\quad+\int_t^T  \psi^{\prime}(Y_{s-}-Y^{\prime}_{s-})(g(s,Y_s)-g^{\prime}(s,Y^{\prime}_s))dA_s
				-2 \int_t^T  \mathds{1}_{ \{Y_{s-}>Y^{\prime}_{s-} \} } (Y_{s-}-Y^{\prime}_{s-})^{+}\bar{Z}_s dB_s\\
				&\quad -2 \int_t^T \int_E  \mathds{1}_{ \{Y_{s-}>Y^{\prime}_{s-} \} } (Y_{s-}-Y^{\prime}_{s-})^{+} \bar{V}_s(e)\tilde{N}(ds,de)
				-2 \int_t^T  \mathds{1}_{ \{Y_{s-}>Y^{\prime}_{s-} \} } (Y_{s-}-Y^{\prime}_{s-})^{+} d\bar{M}_s,
			\end{split}
		\end{equation*}
		since $\int_t^T \psi^{\prime}(Y_{s-}-Y^{\prime}_{s-}) (dK_s - dK^{\prime}_s) \leq 0$ and
		$
		\sum_{t < s \leq T} \left\{  \psi(Y_s-Y^{\prime}_s)-\psi(Y_{s-}-Y^{\prime}_{s-}) - \psi^{\prime}(Y_{s-}-Y^{\prime}_{s-})\Delta\left( 	Y-Y^{\prime}\right)_s \right\} \geq 0.
		$
		Taking expectation in both hand-sides yields
		\begin{equation*}
			\begin{split}
				&\mathbb{E}\left[ \left| (Y_t-Y^{\prime}_t)^{+} \right|^2\right]+\mathbb{E}\int_t^T \mathds{1}_{ \{Y_{s-}>Y^{\prime}_{s-} \} } \left\| Z_s -Z^{\prime}_s \right\|^2 ds \\
				&\leq \left( 2 \alpha^{+}+\kappa^2 \right) \int_t^T \mathbb{E}\left[ \left| (Y_{s}-Y^{\prime}_{s})^{+}\right|^2\right] ds+ \mathbb{E}\int_t^T  \mathds{1}_{\{ Y_{s-}-Y^{\prime}_{s-}>0\}}  \left\| Z_s-Z_{s}^{\prime} \right\|^2 ds.
			\end{split}
		\end{equation*}
		From Gronwall's lemma, we derive $(Y_t-Y^{\prime}_t)^{+} =0$, for each $t \leq T$. Then the result follows from the RCLL property of the paths of the processes $Y$ and $Y^{\prime}$.
		
	\end{proof}
	\begin{remark}\label{Remarks on the version 1 of comparison result}
		\begin{enumerate}
			\item The same comparison result holds for GRBSDE \eqref{basic equation for one upper reflecting barrier} with one upper reflecting barrier.
			\item From the definition of the predictable jumps of the two RCLL processes $Y$ and $Y'$, if $Y \leq Y'$, then we obviously have $\mathbb{P}$-a.s., for any $s \leq t$, $K^d_t - K^d_s \geq K^{\prime,d}_t - K^{\prime,d}_s$. Additionally, if $f$ is also independent of $z$ and $f'$ does not depend on $(z,v)$, then we have $K_t - K_s \geq K^{\prime}_t - K^{\prime}_s$, for any $0 \leq s \leq t \leq T$.
			\item The comparison Theorem \ref{Version comparison theorem -- independ of v} remains true even if $f$ depends also on $v$. But in this case, we have to assume additionally that: For each $(y,z,v,v') \in \mathbb{R} \times \mathbb{R}^d \times \mathbb{L}^2_Q \times \mathbb{L}^2_Q$, there exists a $\mathcal{P} \otimes \mathcal{E}$-measurable process $\gamma^{y,z,v,v'} : \Omega \times [0,T] \times E \rightarrow \mathbb{R}$ such that
			\begin{equation*}
				f(s,y,z,v) - f(s,y,z,v') \leq \int_E \left( v(e) - v'(e) \right) \gamma^{y,z,v,v'}_s(e)Q(s,de)\eta(s),
				\label{Girsanov--Theorem application}
			\end{equation*}
			with $d\mathbb{P} \otimes ds \otimes Q(s,de)\eta(s)$-a.e. for any $(y,z,v,v')$,
			\begin{itemize}
				\item[$\ast$] $-1 \leq \gamma^{y,z,v,v'}_s(e)$,
				\item[$\ast \ast$] $\left| \gamma^{y,z,v,v'}_s(e) \right| \leq \nu(e)$ where $\nu \in \mathbb{L}^2_Q$.
			\end{itemize}
		\end{enumerate}
	\end{remark}
\section{Link with optimal stopping problems}
\label{app}
	We provide the characterization of the state process $(Y_t)_{0 \leq t \leq T}$ of GRBSDEs given by \eqref{basic equation for one lower reflecting barrier--General case} or \eqref{basic equation for one upper reflecting barrier} as the value function of associated optimal stopping problems.
	
	\begin{proposition}\label{Remark : Essup in the case where f is independent of z and v}
		Assume that conditions (\textbf{H1}), (\textbf{H2}),  (\textbf{H3}) hold. 
		\begin{itemize}
			\item[$\bullet$] Let $(\underline{Y}_t,\underline{Z}_t,\underline{V}_t,\underline{K}_t,\underline{M}_t)_{0 \leq t \leq T}$ be the unique solution of the GRBSDE \eqref{basic equation for one lower reflecting barrier--General case}. Then, we have
			\begin{equation}
				\begin{split}
					&\underline{Y}_t = \esssup_{\tau \in \mathcal{T}_t^T}\mathbb{E}\left[ L_{\tau}\mathds{1}_{\{ \tau < T\}}+\xi \mathds{1}_{\{ \tau = T\}} +\int_t^{\tau} f(s,\underline{Y}_s,\underline{Z}_s,\underline{V}_s)ds+\int_t^{\tau} g(s,\underline{Y}_s) dA_s \mid \mathcal{F}_t \right].
				\end{split}
				\label{state process--ess sup--lower barrier}
			\end{equation}
			
			\item[$\bullet$] Let $(\bar{Y}_t,\bar{Z}_t,\bar{V}_t,\bar{K}_t,\bar{M}_t)_{0 \leq t \leq T}$ be the unique solution of the GRBSDE \eqref{basic equation for one upper reflecting barrier}. Then, we have
			\begin{equation*}
				\begin{split}
					&\bar{Y}_t = \essinf_{\tau \in \mathcal{T}_t^T}\mathbb{E}\left[ U_{\tau}\mathds{1}_{\{ \tau < T\}}+\xi \mathds{1}_{\{ \tau = T\}} +\int_t^{\tau} f(s,\bar{Y}_s,\bar{Z}_s,\bar{V}_s)ds+\int_t^{\tau} g(s,\bar{Y}_s) dA_s \mid \mathcal{F}_t \right].
				\end{split}
			\end{equation*}
		\end{itemize}
	\end{proposition}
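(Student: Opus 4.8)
The plan is to recognize the right-hand side of \eqref{state process--ess sup--lower barrier} as a standard optimal-stopping (Snell envelope) value, obtained from the GRBSDE by absorbing the two finite-variation driver terms into the reward. Throughout I abbreviate $f_s:=f(s,\underline{Y}_s,\underline{Z}_s,\underline{V}_s)$ and $g_s:=g(s,\underline{Y}_s)$, and I set $R_s:=\int_0^s f_u\,du+\int_0^s g_u\,dA_u$ and $\hat{Y}_s:=\underline{Y}_s+R_s$. I treat only the lower-barrier case; the upper-barrier statement for \eqref{basic equation for one upper reflecting barrier} follows by the symmetric argument (replacing $\esssup$ by $\essinf$, $L$ by $U$, and reversing the sign of the reflection, e.g.\ by passing to $(-\underline{Y},-\xi,-L)$). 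First I would record the integrability that legitimizes the manipulations: since $(\underline{Y},\underline{Z},\underline{V},\underline{K},\underline{M})\in\mathcal{S}^{2}_{\mu}(A;\mathbb{R})\times M^2_{\mu}(\mathbb{R}^d)\times M^2_{\mu}(\mathbb{L}^2_Q)\times\mathcal{S}^2\times\mathbb{M}_{\mu}^2$, assumptions \textbf{(H2)}-(v)-(vi) and Remark~\ref{Rmq 1} show that $R$ has integrable variation and that $\int\underline{Z}\,dB$, $\int\!\int\underline{V}\,\tilde{N}$ and $\int d\underline{M}$ are uniformly integrable martingales on $[0,T]$. Rewriting \eqref{basic equation for one lower reflecting barrier--General case}-(i) between $t$ and an arbitrary $\tau\in\mathcal{T}_t^T$ then shows that $\hat{Y}$ is a strong supermartingale of class (D) (a martingale minus the nondecreasing $\underline{K}$) which dominates the reward $\Theta_s:=(L_s+R_s)\mathds{1}_{\{s<T\}}+(\xi+R_T)\mathds{1}_{\{s=T\}}$, and which coincides with $\Theta$ at $T$ because $\underline{Y}_T=\xi$.

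For the inequality ``$\geq$'' (the value dominated by $\underline{Y}_t$), I would fix $\tau\in\mathcal{T}_t^T$, localize \eqref{basic equation for one lower reflecting barrier--General case}-(i) on $[t,\tau]$, and take $\mathbb{E}[\,\cdot\mid\mathcal{F}_t]$. The three stochastic integrals vanish by optional sampling, $\underline{K}_\tau-\underline{K}_t\geq 0$ since $\underline{K}$ is nondecreasing, and $\underline{Y}_\tau\geq L_\tau$ on $\{\tau<T\}$ while $\underline{Y}_T=\xi$. This yields $\underline{Y}_t\geq\mathbb{E}\big[L_\tau\mathds{1}_{\{\tau<T\}}+\xi\mathds{1}_{\{\tau=T\}}+\int_t^\tau f_s\,ds+\int_t^\tau g_s\,dA_s\mid\mathcal{F}_t\big]$, and taking the essential supremum over $\tau$ gives one half of \eqref{state process--ess sup--lower barrier}.

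For the reverse inequality ``$\leq$'', the cleanest route is minimality: $\hat{Y}$ is not merely \emph{a} supermartingale dominating $\Theta$, but the \emph{smallest} one, i.e.\ the Snell envelope of $\Theta$, whence the general optimal-stopping representation for strong supermartingales in a general filtration gives $\hat{Y}_t=\esssup_{\tau\in\mathcal{T}_t^T}\mathbb{E}[\Theta_\tau\mid\mathcal{F}_t]$. Subtracting the $\mathcal{F}_t$-measurable quantity $R_t$ and using $R_\tau-R_t=\int_t^\tau f_s\,ds+\int_t^\tau g_s\,dA_s$ (with $R_\tau=R_T$ on $\{\tau=T\}$) then produces exactly \eqref{state process--ess sup--lower barrier}. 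The minimality is precisely where the Skorokhod conditions \eqref{basic equation for one lower reflecting barrier--General case}-(iii)-(iv) enter: they force $\underline{K}$ to be flat off the contact set $\{\underline{Y}=L\}$ (its continuous part increases only when $\underline{Y}_s=L_s$, and its purely discontinuous part is carried by the predictable set $\{\underline{Y}_-=L_-\}\cap\{\Delta L<0\}$), which is exactly the flatness property characterizing the Mertens decomposition of a Snell envelope. Intuitively one would try the hitting time $\tau^\ast_t:=\inf\{s\geq t:\underline{Y}_s\leq L_s\}\wedge T$, along which $\underline{K}$ should not charge $[t,\tau^\ast_t)$, turning the ``$\geq$'' computation into an equality.

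The hard part will be handling the reflection along $\tau^\ast_t$ in the presence of both totally inaccessible and predictable negative jumps of $L$. The naive hitting time can fail: $\underline{K}^d$ may jump at a predictable time $s<\tau^\ast_t$ where $\underline{Y}_{s-}=L_{s-}$ yet $L_s<\underline{Y}_s$, so that $\underline{K}$ is \emph{not} flat on $[t,\tau^\ast_t)$, and similarly $\underline{Y}_{\tau^\ast_t}=L_{\tau^\ast_t}$ need not hold at a predictable contact time. The resolution is to work within the strong-supermartingale framework, treating the continuous and purely discontinuous parts of $\underline{K}$ separately and using the Mertens decomposition to locate the predictable contact instants, or equivalently to regularize via the level times $\tau^\varepsilon_t:=\inf\{s\geq t:\underline{Y}_s\leq L_s+\varepsilon\}\wedge T$ and pass to the limit $\varepsilon\downarrow 0$, invoking the RCLL regularity of $\underline{Y}$ and $L$ together with the class-(D) uniform integrability to justify the limit. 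This jump analysis—showing that the Skorokhod minimality forces equality—is the sole nontrivial step; the remainder is the routine bookkeeping described above.
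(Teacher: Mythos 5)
Your proposal is correct and takes essentially the same route as the paper: the inequality $\underline{Y}_t \geq \esssup_{\tau}(\cdots)$ follows from conditioning the equation on $[t,\tau]$ together with $\underline{K}$ nondecreasing, $\underline{Y}\geq L$ and $\underline{Y}_T=\xi$ (the paper phrases this via supermartingale/Snell-envelope domination), while the reverse inequality is obtained exactly as in the paper via the $\varepsilon$-level hitting times $\tau^{\varepsilon}_t=\inf\{s\geq t:\underline{Y}_s\leq L_s+\varepsilon\}\wedge T$ (the paper's $\nu^{p}_t$ with $\varepsilon=1/p$), on which the Skorokhod condition forces $\underline{K}$ to be flat because the margin keeps $\underline{Y}_{s-}\geq L_{s-}+\varepsilon$, followed by letting $\varepsilon\downarrow 0$. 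The abstract Mertens/minimality route you mention first is never actually carried out, but the concrete regularization argument you fall back on is precisely the paper's proof, including the correct diagnosis that the naive hitting time fails at predictable times where $\underline{Y}_{-}=L_{-}$.
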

	\begin{proof}
		We provide the proof for the one lower reflected generalized BSDE, as the argument for the one upper case is similar.
		
		To simplify notation, we denote by $\left(Y_t,Z_t,V_t,K_t,M_t\right)_{0 \leq t \leq T}$ the unique solution of the GRBSDE \eqref{basic equation for one lower reflecting barrier--General case}. In other words, $\left(Y_t,Z_t,V_t,K_t,M_t\right)_{0 \leq t \leq T}=(\underline{Y}_t,\underline{Z}_t,\underline{V}_t,\underline{K}_t,\underline{M}_t)_{0 \leq t \leq T}$, which exists due to Theorem \ref{Theorem : GRBSDE with one lower barrier--Main results}.

		Note that $\left( Y_t+\int_0^t f(s,Y_s,Z_s,V_s)ds+\int_0^t g(s,Y_s)dA_s   \right)_{0 \leq t \leq T}$ is an RCLL supermartingale. It follows from the properties of the Snell envelope that
		\begin{equation}
			Y_t \geq \esssup_{\tau \in \mathcal{T}_t^T} \mathbb{E}\left[ \int_t^{\tau} f(s,Y_s,Z_s,V_s)ds+\int_t^{\tau }g(s,Y_s)dA_s+L_{\tau} \mathds{1}_{\{ \tau<T\}} +\xi \mathds{1}_{\{\tau=T\}} \mid \mathcal{F}_t \right].
			\label{this combined with this}
		\end{equation}
		To obtain the reversible inequality, we now pick a suitable sequence of stopping times in $\mathcal{T}_t^T$:\\ Let $t \in [0,T]$ be fixed. For $p \geq 1$, define the sequence $\{\nu^p_t\}_{p \geq 1}$ as follows:
		\begin{equation*}
			\nu^p_t=\inf\{s \geq t : Y_s=L_s \text{ or } Y_s \leq L_s +\frac{1}{p}\} \wedge T.
		\end{equation*}
		From its definition, it's clear that $Y_{s-}>L_{s-}$ on $[t,\nu^p_t]$ and $Y_s>L_s$ on $[t,\nu^p_t[$. Now from the Skorokhod condition $\int_0^T (Y_{s-}-L_{s-})dK_s=0$, the definition of the jump process $(K^{d}_t)_{0 \leq t \leq T}$ and the continuity of $(K^c_t)_{0 \leq t \leq T}$ we deduce that: $\int_t^{\nu^p_t} dK^{d}_s=\int_t^{\nu^p_t} dK^{c}_s=0$, hence $\int_t^{\nu^p_t} dK_s=0$. Moreover, it holds that $Y_s \mathds{1}_{\{ \nu^p_t < T \}}=L_s \mathds{1}_{\{ \nu^p_t < T \}}$ or $Y_s \mathds{1}_{\{ \nu^p_t < T \}} \leq L_s \mathds{1}_{\{ \nu^p_t < T \}} +\frac{1}{p}$. Indeed, let $\omega \in \Omega$ be fixed such that $\nu^p_t(\omega)<T$. Then, there exists a sequence $(t_k)_{k \in \mathbb{N}}$ of real numbers which decrease to $\nu^p_t(\omega)$ such that $Y_{t_k} = L_{t_{k}}$ or $Y_{t_k} \leq L_{t_k}+\frac{1}{p}$ for all $k \in \mathbb{N}$. Since the obstacle $L$ and the state process $Y$ have right continuous trajectories, thus taking the limit as $k \rightarrow \infty$ we get: $Y_{\nu^p_t}=L_{\nu^p_t}$ or $Y_{\nu^p_t} \leq L_{\nu^p_t}+\frac{1}{p}$, which implies $Y_{\nu^p_t}\mathds{1}_{\{ \nu^p_t < T \}}=L_{\nu^p_t}\mathds{1}_{\{ \nu^p_t < T \}}$ or $Y_{\nu^p_t} \mathds{1}_{\{ \nu^p_t < T \}} \leq  L_{\nu^p_t} \mathds{1}_{\{ \nu^p_t < T \}}+  \frac{1}{p} $. Thus, in both cases, we conclude that $Y_{\nu^p_t} \mathds{1}_{\{ \nu^p_t < T \}} \leq  L_{\nu^p_t} \mathds{1}_{\{ \nu^p_t < T \}}+  \frac{1}{p} $ holds. But from the GRBSDE \eqref{basic equation for one lower reflecting barrier--General case}, we have
		\begin{equation*}
			\begin{split}
				Y_{t}&= Y_{\nu^p_t}+\int_t^{\nu^p_t} f(s,Y_s,Z_s,V_s)ds+\int_t^{\nu^p_t} g(s,Y_s)\mathrm{d}A_s-\int_t^{\nu^p_t} Z_s d B_s   -\int_t^{\nu^p_t} \int_E V_s(e)\tilde{N}(ds,de)-\int_t^{\nu^p_t} dM_s\\
				&=Y_{\nu^p_t}\mathds{1}_{\{ \nu^p_t < T \}}+\xi \mathds{1}_{\{ \nu^p_t = T \}}+\int_t^{\nu^p_t} f(s,Y_s,Z_s,V_s)ds+\int_t^{\nu^p_t} g(s,Y_s)\mathrm{d}A_s-\int_t^{\nu^p_t} Z_s d B_s\\
				& \qquad   -\int_t^{\nu^p_t} \int_E V_s(e)\tilde{N}(ds,de)-\int_t^{\nu^p_t} dM_s\\
				& \leq L_{\nu^p_t}\mathds{1}_{\{ \nu^p_t < T \}}+\xi \mathds{1}_{\{ \nu^p_t = T \}}+\int_t^{\nu^p_t} f(s,Y_s,Z_s,V_s)ds+\int_t^{\nu^p_t} g(s,Y_s)\mathrm{d}A_s\\
				& \qquad  -\int_t^{\nu^p_t} Z_s d B_s -\int_t^{\nu^p_t} \int_E V_s(e)\tilde{N}(ds,de)-\int_t^{\nu^p_t} dM_s+\frac{1}{p}.
			\end{split}
		\end{equation*}
		Taking the conditional expectation with respect to $\mathcal{F}_t$, we obtain for every $p \geq 1$,
		\begin{equation*}
			\begin{split}
				Y_{t}\leq \mathbb{E}\left[ \int_t^{\nu^p_t} f(s,Y_s,Z_s,V_s)ds+\int_t^{\nu^p_t} g(s,Y_s)\mathrm{d}A_s  +L_{\nu^p_t}\mathds{1}_{\{ \nu^p_t < T \}}+\xi \mathds{1}_{\{ \nu^p_t = T \}} \mid \mathcal{F}_t \right]+\frac{1}{p}.
			\end{split}
		\end{equation*}
		Then,
		\begin{equation*}
			\begin{split}
				Y_{t} & \leq \esssup_{\tau \in \mathcal{T}_t^T} \mathbb{E}\left[ \int_t^{\tau} f(s,Y_s,Z_s,V_s)ds+\int_t^{\tau }g(s,Y_s)dA_s+L_{\tau} \mathds{1}_{\{ \tau<T\}} +\xi \mathds{1}_{\{\tau=T\}} \mid \mathcal{F}_t \right]+\frac{1}{p},\\
			\end{split}
		\end{equation*}
		for an arbitrary $p \geq 1$. Next, letting $p \rightarrow \infty$ and using \eqref{this combined with this} we deduce that $(Y_t)_{0 \leq t \leq T}$ satisfies \eqref{state process--ess sup--lower barrier}.
	\end{proof}
	%
	%

	%
	%
	%
	%



\begin{thebibliography}{99}
		\bibitem{CRP}  S. Crépey and A. Matoussi,  Reflected and doubly reflected BSDEs with jumps: A priori estimates and comparison, \emph{Ann. Appl. Probab.} \textbf{18} (2008), 2041--69. 
		\bibitem{Cvitanic}
		J. Cvitanic and I. Karatzas, Backward stochastic differential equations with reflection and Dynkin games, \emph{Ann. Probab.} \textbf{24} (1996), 2024--2056.
		\bibitem{HACHEL} M. Elhachemy and M. El Otmani, Reflected generalized discontinuous BSDEs with rcll barrier and an obstacle problem of IPDE with nonlinear Neumann boundary conditions, \emph{Mod. Stoch.: Theory Appl.} \textbf{10} (2023), 77--110.
		\bibitem{HACHEL2} M. Elhachemy and M. El Otmani, Reflected generalized BSDE with jumps under stochastic conditions and an obstacle problem for integral-partial differential equations with nonlinear Neumann boundary conditions, \emph{J. Integral Equ. Appl.}  \textbf{35} (2023), 311--338.
		\bibitem{ElKaroui}
		N. El Karoui, C. Kapoudjian, E. Pardoux, S. Peng and M. C. Quenez, Reflected solutions of backward SDE's, and related obstacle problems for PDE's, \emph{Ann. Probab.} \textbf{25} (1997), 702--737.
		\bibitem{AMR} N. El Karoui, E. Pardoux and M. C. Quenez, Reflected backward SDEs and American options, in: \emph{Numerical Methods in Finance}, Publ. Newton Inst. 13, Cambridge University, Cambridge (1997), 215--231.
		\bibitem{BadrPJM} B. Elmansouri, BSDEs with two reflecting obstacles driven by RCLL martingales under stochastic Lipschitz coefficient, \emph{Palest. J. Math.} \textbf{13} (2024), 293--326.
		\bibitem{ELMandELO} B. Elmansouri and M. El Otmani, Generalized backward stochastic differential equations with jumps in a general filtration, \emph{Random Oper. Stoch. Equ.} \textbf{31} (2023), 205--216.
		\bibitem{ELMandELOVMSTA} B. Elmansouri and M. El Otmani, Generalized BSDEs driven by RCLL martingales with stochastic monotone coefficients, \emph{Mod. Stoch.: Theory Appl.} \textbf{11} (2024), 109--128.
		\bibitem{StochDyn} B. Elmansouri and M. El Otmani, Doubly reflected BSDEs driven by RCLL martingales under stochastic Lipschitz coefficient, \emph{Stoch. Dyn.} \textbf{24} (2024), Paper Id 2450001, 1--39.
		\bibitem{ElmansouriROSE2025}
		B.~Elmansouri and M.~El~Otmani, {Doubly reflected generalized BSDEs with two completely separated RCLL barriers in a general filtration,} \emph{Random Oper. Stoch. Equ.} \textbf{34} (2026), 31--63.
		\bibitem{ELOTMANI G}
		M. El Otmani, Generalized BSDE driven by a L\'{e}vy process, Article ID 085407, \emph{J. Appl. Math. Stoch. Anal.} \textbf{2006} (2006), 1--26.
		\bibitem{OtmaniR}
		M. El Otmani, Reflected BSDE driven by a L\'{e}vy process, \emph{J. Theor. Probab.} \textbf{22} (2009), 601--619.
		\bibitem{El Otmani and Mohamed}
		M. El Otmani and N. Mrhardy, Generalized BSDE with two reflecting barriers,\emph{ Random Oper. Stoch. Equ.} \textbf{16} (2008), 357--382.
		\bibitem{Essaky}
		E.H. Essaky, Reflected backward stochastic differential equation with jumps and RCLL obstacle, \emph{Bull. Sci. Math.} \textbf{132} (2008), 690--710.	
		\bibitem{ESAS} E. H. Essaky and M. Hassani, Generalized BSDE with 2-reflecting barriers and stochastic quadratic growth, \emph{J. Differ. Equ.} \textbf{254} (2013), 1500--1528.
		\bibitem{FakhouriOR}
		I. Fakhouri and Y. Ouknine, $\mathbb {L}^2$-solutions for reflected BSDEs with jumps under monotonicity and general growth conditions: a penalization method, \emph{SeMA J.} \textbf{76} (2019), 37--63.
		\bibitem{HamadeneoneR}
		S. Hamad\`{e}ne, Reflected BSDE's with discontinuous barrier and application, \emph{Stoch.: Int. J. Probab. Stoch. Process.
		} \textbf{74} (2002), 571--596.
		\bibitem{HamadeneHass}
		S. Hamad\`{e}ne and M. Hassani, BSDEs with two reflecting barriers: the general result, \emph{Probab. Theory Relat. Fields.} \textbf{132} (2005), 237--264.
		\bibitem{HamadeneO}
		S. Hamad\`{e}ne and Y. Ouknine, Reflected backward stochastic differential equation with jumps and random obstacle, \emph{Electron. J. Probab.} \textbf{8} (2003), 1--20.
		\bibitem{HOKN}
		S. Hamad\`{e}ne and Y. Ouknine, Reflected backward SDEs with general jumps, \emph{Theory Probab. Appl.} \textbf{60} (2016), 263--280.
		\bibitem{hamadenewang}
		S. Hamad\`{e}ne and H. Wang, BSDEs with two RCLL reflecting obstacles driven by Brownian motion and Poisson measure and a related mixed zero-sum game, \emph{Stoch. Process. Appl.} \textbf{119} (2009), 2881--2912.
		\bibitem{jacodshiryaev} J. Jacod and A. N. Shiryaev, Limit Theorems for Stochastic Processes, 2nd ed., Grundlehren Mathematischen Wiss. 288, Springer, Berlin, 2003.
		\bibitem{kallenberg1997foundations} O. Kallenberg, Foundations of Modern Probability, 3rd ed.,Probab. theory stoch. Model. 99, Springer Nature, Switzerland, 2021
		\bibitem{Marzougue}
		M. Marzougue and M. El Otmani, Reflected BSDEs with jumps and two rcll barriers under stochastic Lipschitz coefficient. \textit{Commun. Stat.-Theory Methods.}
		 \textbf{50} (2021), 6049--6066.
		
		\bibitem{Pardoux}
		E. Pardoux, Generalized discontinuous backward stochastic differential equations, in: \emph{Backward Stochastic Differential Equations}, Pitman Res. Notes Math. Ser. 364, Longman, Harlow (1997), 207--219.
		\bibitem{ePardouxB}
		E. Pardoux, BSDEs, weak convergence and homogenization of semilinear PDEs, in: \emph{Nonlinear Analysis, Differential Equations and Control, NATO Sci. Ser. 528},  Springer, Dordrecht (1999), 503--549.
		\bibitem{pardouxpengadapted}
		E. Pardoux and S. Peng, Adapted solution of backward stochastic differential equations, \emph{Syst. Control Lett.} \textbf{14} (1990), 51--61.
		\bibitem{Pardoux Z}
		E. Pardoux and S. Zhang, Generalized BSDEs and nonlinear Neumann boundary value problems, \emph{Probab. Theory Relat. Fields.} \textbf{110} (1998), 535--558.
		\bibitem{PengPDE}
		S. Peng, Probabilistic interpretation for systems of quasilinear parabolic partial differential equations, \emph{Stoch. Stoch. Rep.}
		\textbf{37} (1991), 61--74.
		\bibitem{Protter} P. E. Protter, Stochastic integration and differential equations, 2nd ed., Stoch. Model. Appl. Probab. 21, Springer, Berlin, Heidelberg, 2004.
		\bibitem{Ren El Otmani}
		Y. Ren and  M. El Otmani, Generalized reflected BSDEs driven by a Lévy process and an obstacle problem for PDIEs with a nonlinear Neumann boundary condition, \emph{J. Comput. Appl. Math.} \textbf{233} (2010), 2027--2043.
		\bibitem{Ren and Xia}
		Y. Ren and N. Xia, Generalized reflected BSDE and an obstacle problem for PDEs with a nonlinear Neumann boundary condition, \emph{Stoch. Anal. Appl.} \textbf{24} (2006), 1013--1033.
		
		
		%
		%
		%
		%
		%
		%
		%
		%
		
	\end{thebibliography}
\end{document}